\documentclass[review]{article}

\usepackage[english]{babel}
\usepackage[colorlinks=true,linkcolor=blue,citecolor=red,urlcolor=cyan]{hyperref}\usepackage{graphicx}
\usepackage{hyperref}
\usepackage{amsmath,amsthm}
\usepackage{amssymb}
\usepackage{enumitem}
\usepackage{geometry}
\usepackage{authblk}
\newtheorem{thm}{Theorem}
\title{Spectrum of the Dirac operator in shrinking tubes}
\date{}
\author{Nour Kerraoui}
\affil[1]{Aix-Marseille Universit\'e, CNRS, I2M, Marseille, France.}
\theoremstyle{remark}

\newtheorem{lem}{Lemma}
\theoremstyle{definition}
\newtheorem{defe}{Definition}
\newtheorem{prop}{Proposition}

\usepackage{bbm}
\newenvironment{myproof}[2] {{\textit{Proof of {#1} {#2}.}}}{\hfill$\square$}

\begin{document}
\maketitle
\begin{abstract}
In this paper we investigate the spectrum of the Dirac operator posed in a tubular neighborhood of a planar loop with infinite mass boundary conditions. We show that when the width of the tubular neighborhood goes to zero the asymptotic expansion of the eigenvalues is driven by a one dimensional operator of geometric nature involving the curvature of the loop.
\end{abstract}

\section{Introduction}
\subsection{Motivations}

The Dirichlet Laplacian in thin curved tubes was the subject of many works within the last years. It is well known that the limit operator is a Shr\"{o}dinger operator with a geometric electric potential. See for instance \cite{exn,oli,sed} where the convergence is understood in the norm resolvent sense, see also the works \cite{haa,lam,was} where the adiabatic limit in neighborhoods of Riemaniann submanifolds is investigated. Recently, the relativistic counterpart of such systems involving the Dirac operator attracted a lot of attention. One motivation among others of these works is that quasiparticles in promising materials such as graphene can mimic the quantum electrodynamics \cite{cas,FW12}.\\ The study of Dirac operators in domains was started in $1987$ by Berry and Mondragon \cite {mon} where the operator was subject to some boundary conditions called infinite mass boundary conditions. The latter is obtained by coupling the standard Dirac operator with a mass term being zero inside the domain under consideration and infinite elsewhere, this study was mathematically justified in \cite{sto}. It initiated works concerning the self-adjointness of such operators in graphene quantum dots \cite{BFVdBS,bar,loi} and its three dimensional equivalent, the 
  MIT bag model \cite{azz}.
  \\ The main motivation of the present paper is the work realized in \cite{th}, where the authors provided a spectral study of the Dirac operator in a shrinking relativistic quantum waveguide. Contrary to the non-relativistic setting, they got that the effective operator is just the free Dirac operator on the underlying curve, the geometry is expected to appear at the following order in the asymptotic expansion. In our paper we consider the Dirac operator posed in a tubular neighborhood of a planar loop and we investigate the spectrum of this operator when the domain shrinks to the reference curve. We give an expansion of the eigenvalues in the thin-width regime where the splitting of the eigenvalues is given by a one dimensional operator. The latter is defined {\it via} its quadratic form and involves the curvature. We use a standard strategy  based on perturbation theory applied to the square of this Dirac operator and make use of the min-max principle.

\subsection{Statement of the problem \& main result}
For $\ell > 0$, we set $\mathbb{T} = \mathbb{R} / \ell \mathbb{Z}$. Let $\gamma: \mathbb{T} \to \mathbb{R}^2$ be an injective arc length parametrization of a regular planar loop of class $C^4$ and set $\Gamma = \gamma(\mathbb{T})$. We are interested in the Dirac operator with infinite mass boundary conditions posed in a tubular neighborhood of size $\varepsilon > 0$ of $\Gamma$. To define this tubular neighborhood, we consider the map
\begin{equation}
	\Phi_\varepsilon : \left\{ \begin{array}{lcl}\mathbb{T} \times (-1,1) &\to& \mathbb{R}^2\\ (s,t) & \mapsto& \gamma(s) + \varepsilon t \nu(s),
	\end{array}
				\right.
\label{eqn:diffeo}
\end{equation}
where for $s\in \mathbb{T}$, $\nu(s)$ denotes the normal at the point $\gamma(s) \in \Gamma$ such that $(\gamma'(s),\nu(s))$ is a positively-oriented orthonormal basis of $\mathbb{R}^2$. A well known result of differential geometry shows the existence of $\varepsilon_0>0$ such that for all $\varepsilon \in (0,\varepsilon_0)$ the map $\Phi_\varepsilon$ is a global diffeomorphism from $\mathbb{T}\times (-1,1)$ to the tubular neighborhood of $\Gamma$ of width $2\varepsilon$ defined by $\Omega_\varepsilon := \Phi_\varepsilon\big(\mathbb{T} \times (-1,1)\big)$. As we are interested in the regime $\varepsilon \to 0$ there is no restriction to assume that $\varepsilon \in (0,\varepsilon_0)$ and we will do so all along this paper.

On its part, for $m\in \mathbb{R}$, the Dirac operator with infinite mass boundary conditions posed in $\Omega_\varepsilon$ is defined as
\begin{eqnarray}
	\begin{aligned}
		\text{dom}(\mathcal D_{\Gamma}(\varepsilon))&:=\left\lbrace u\in H^1(\Omega_{\varepsilon},\mathbb{C}^2):-i\sigma_3\sigma \cdot\nu_{\varepsilon}u=u  \,\mathrm{ on }\, \partial\Omega_{\epsilon}  \right\rbrace,\\   \mathcal D_{\Gamma}(\varepsilon)u&:=-i\sigma\cdot\nabla u+m\sigma_3u, \end{aligned}\label{bb}
\end{eqnarray}
where $\sigma_k, k\in\left\lbrace 1,2,3 \right\rbrace $ are the Pauli matrices defined by
\[
	\sigma_1 := \begin{pmatrix}0 & 1 \\ 1 & 0\end{pmatrix},\quad \sigma_2 := \begin{pmatrix}0 & -i \\ i & 0\end{pmatrix},\quad \sigma_3 := \begin{pmatrix}1 & 0 \\ 0 & -1\end{pmatrix}
\]
and for $x = (x_1,x_2) \in \mathbb{C}^2$, there holds $\sigma\cdot x = x_1 \sigma_1 + x_2 \sigma_2$. Here $\nu_\varepsilon$ denotes the outward pointing normal vector field of $\Omega_\varepsilon$.

As defined, $\mathcal{D}_\Gamma(\varepsilon)$ is known to be self-adjoint \cite[Theorem 1.1]{BFVdBS}. It also has compact resolvent, this follows from the compact embedding of $\text{dom}(D_{\Gamma}(\varepsilon))$ in $L^2(\Omega_{\varepsilon},\mathbb{C}^2)$ and it implies that its spectrum is composed of discrete eigenvalues accumulating to $\pm \infty$. Moreover, they are symmetric with respect to zero due to the invariance of the system under
charge conjugation, corresponding to the operator
$\sigma_1 C$,
where $C$ is the complex conjugation operator. Indeed, if $u$ is an eigenfunction associated with an eigenvalue $E$, one notices that $\sigma_1 C u$ is an eigenfunction associated with an eigenvalue $-E$. All along the paper $\mathbb{N}=\left\lbrace 1,2,3...\right\rbrace $ denotes the set of positive natural integer. For $j\in \mathbb{N}$, we denote by $E_j(\varepsilon)$ the $j$-th positive eigenvalue of $D_\Gamma(\varepsilon)$ counted with multiplicities.

To state the main result of this paper, we need to introduce an auxiliary one-dimensional operator defined {\it via} its quadratic form. Namely, consider the quadratic form
\begin{equation}
q^{1D}[f] = \int_{\mathbb{T}}\Big(|f' + i\kappa(\frac12-\frac1\pi) \sigma_3 f|^2 - \frac{\kappa^2}{\pi^2}|f|^2\Big) ds,\quad \text{dom}(q^{1D}) = H^1(\mathbb{T},\mathbb{C}^2).
\label{eqn:fqlon}
\end{equation}
where for $s \in \mathbb{T}$, $\kappa(s)$ is the curvature of the curve $\Gamma$ at the point $\gamma(s)$. It is defined through the Frenet formula $\gamma''(s) = \kappa(s)\nu(s)$. This quadratic form is densely defined, closed and bounded from below thus by Kato's first representation theorem \cite[Ch. VI, Thm. 2.1]{kat} it is associated with a self-adjoint operator and this operator has compact resolvent.

In order to state our main theorem we need to recall the min-max priciple.

\begin{defe} 
	Let $Q$ be a closed semi-bounded from below sesquilinear form with dense domain $\text{dom}(Q)$ in a complex Hilbert space $\mathcal{H}.$
	Let $A$ be the unique self-adjoint operator acting on $\mathcal{H}$ generated by the sesquilinear form $Q$. We define the $j$-th min-max value of $A$ by 
	\begin{align} \mu_j(A)=\inf_{
			W\subset \text{dom}(Q)\atop
			\text{dim}W=n 
		}\sup_{u\in W\setminus\left\lbrace0 \right\rbrace } \frac{Q(u,u)}{\parallel u \parallel^2_{\mathcal{H}}}.
	\end{align}
	
\end{defe}

\begin{prop}[min-max principle] 
	Let $Q$ be a closed semi-bounded from below sesquilinear form with
	dense domain in a Hilbert space $\mathcal{H}$ and let $A$ be the unique self-adjoint operator associated with $Q$. If we assume that $A$ has a compact resolvent then $\mu_j(A)$ is the $j$-th eigenvalue of $A$ counted with multiplicities.
\end{prop}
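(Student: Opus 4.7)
The plan is to invoke the spectral theorem for self-adjoint operators with compact resolvent and then execute the classical two-sided comparison argument that underlies the min-max principle.

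Since $A$ is self-adjoint, bounded from below, and has compact resolvent, its spectrum consists of a non-decreasing sequence of real eigenvalues $\lambda_1 \leq \lambda_2 \leq \cdots$ counted with multiplicity, tending to $+\infty$, with associated eigenfunctions $(\phi_k)_{k\geq 1}$ forming a Hilbert basis of $\mathcal{H}$. By Kato's first representation theorem, each $\phi_k$ lies in $\text{dom}(Q)$. Moreover, after shifting $A$ by a constant $c$ so that $A+c>0$ and using that $\text{dom}(Q) = \text{dom}\bigl((A+c)^{1/2}\bigr)$, the spectral calculus of $(A+c)^{1/2}$ supplies the form expansion
\[
Q(u,u) = \sum_{k\geq 1} \lambda_k \, |\langle u,\phi_k\rangle_{\mathcal H}|^2,\qquad u\in \text{dom}(Q),
\]
with convergence of the partial sums both in $\mathcal{H}$ and in the form norm. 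The goal is then to prove $\mu_j(A)=\lambda_j$ through two matching inequalities.

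For the upper bound $\mu_j(A)\leq\lambda_j$, I would test the variational quotient on the $j$-dimensional subspace $W_j := \text{span}(\phi_1,\ldots,\phi_j)\subset\text{dom}(Q)$. Any $u=\sum_{k=1}^{j}c_k\phi_k\in W_j$ satisfies $Q(u,u) = \sum_{k=1}^{j}\lambda_k|c_k|^2 \leq \lambda_j\|u\|_{\mathcal H}^2$ by orthonormality, so the definition of $\mu_j$ yields $\mu_j(A)\leq\lambda_j$ immediately. For the reverse inequality, let $W\subset\text{dom}(Q)$ be an arbitrary subspace of dimension $j$ and consider the linear map $P_W : W \to \mathbb{C}^{j-1}$ defined by $P_W u = \bigl(\langle u,\phi_k\rangle_{\mathcal H}\bigr)_{k=1}^{j-1}$. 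Since $\dim W = j > j-1$, its kernel is non-trivial; pick $u_W\in W\setminus\{0\}$ with $\langle u_W,\phi_k\rangle_{\mathcal H} = 0$ for every $k<j$. Expanding $u_W = \sum_{k\geq j}c_k\phi_k$ and substituting into the form expansion gives $Q(u_W,u_W)\geq\lambda_j\|u_W\|_{\mathcal H}^2$, hence the supremum of the Rayleigh quotient over $W$ is bounded below by $\lambda_j$; taking the infimum over $W$ produces $\mu_j(A)\geq\lambda_j$.

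The only genuine subtlety is the form expansion itself: one needs it for every $u\in\text{dom}(Q)$, not merely on the operator core $\text{dom}(A)$. This is the step that really uses self-adjointness and the functional calculus, via the positive self-adjoint square root $(A+c)^{1/2}$ whose domain coincides with the form domain. Once this is in place, the rest of the argument reduces to elementary dimension counting and a Parseval-type computation, and the two inequalities assemble into the claimed identity $\mu_j(A) = \lambda_j$.
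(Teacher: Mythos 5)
Your proof is correct and complete. Note that the paper does not actually supply a proof of this proposition; it is stated as a classical fact (the Courant--Fischer min-max principle for semi-bounded self-adjoint operators) and invoked without argument, so there is no ``paper's proof'' to compare against. Your argument is precisely the textbook one: use the spectral theorem for the compact-resolvent operator to get an orthonormal eigenbasis, pass to the form via $\mathrm{dom}(Q)=\mathrm{dom}\bigl((A+c)^{1/2}\bigr)$ so that the Parseval-type expansion $Q(u,u)=\sum_k\lambda_k|\langle u,\phi_k\rangle|^2$ holds on all of $\mathrm{dom}(Q)$ and not merely on $\mathrm{dom}(A)$, then establish the two inequalities by testing on $\mathrm{span}(\phi_1,\dots,\phi_j)$ and by the rank-nullity selection of a vector in any $j$-dimensional trial space orthogonal to $\phi_1,\dots,\phi_{j-1}$. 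You correctly identify the one genuinely nontrivial step, namely extending the quadratic-form expansion to the full form domain via the functional calculus for $(A+c)^{1/2}$; once that is secured, the rest is elementary linear algebra. No gaps.
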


The main result of this paper reads as follows.

\begin{thm} Let $j \in \mathbb{N}$, there exists $\varepsilon_1 > 0$ such that for all  $\varepsilon \in (0,\varepsilon_1)$ there holds
\[
	E_j(\varepsilon) = \frac{\pi}{4\varepsilon} + \frac2\pi m - \frac{16}{\pi^3}m^2\varepsilon + \frac{2}\pi(m^2 + \mu_{2j}(q^{1D}))\varepsilon + \mathcal{O}(\varepsilon^2).
\]
\label{thm:1}
\end{thm}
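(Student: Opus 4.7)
The plan is to conjugate $\mathcal{D}_\Gamma(\varepsilon)$ by a unitary $U_\varepsilon:L^2(\Omega_\varepsilon,\mathbb{C}^2)\to L^2(\mathbb{T}\times(-1,1),\mathbb{C}^2)$ absorbing both the Jacobian density $\sqrt{\varepsilon(1-\varepsilon t\kappa(s))}$ and a spin frame rotation $R(s)\in\mathrm{SU}(2)$ adjusting the canonical basis to the Frenet frame $(\gamma'(s),\nu(s))$. A standard calculation shows that the resulting operator $\widehat{D}_\varepsilon := U_\varepsilon \mathcal{D}_\Gamma(\varepsilon) U_\varepsilon^{-1}$ has a longitudinal part of scaling $1/(1-\varepsilon t\kappa)$, a transverse part of scaling $1/\varepsilon$, a spin-connection correction $\propto\kappa$, and the mass $m\sigma_3$, with boundary conditions $u_2 = -u_1$ at $t = 1$ and $u_2 = u_1$ at $t = -1$.

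\textbf{Step 2 (transverse spectrum).} The leading transverse operator is $\mathcal{T}_\varepsilon := \varepsilon^{-1}\sigma_2(-i\partial_t) + m\sigma_3$. Separating variables yields the implicit eigenvalue equation $m\varepsilon = -k\cot(2k)$ with $k=\varepsilon\sqrt{E^2-m^2}$; expanding near $k=\pi/4$ gives the smallest positive eigenvalue
\[
E_\perp(\varepsilon) = \frac{\pi}{4\varepsilon} + \frac{2m}{\pi} + \Bigl(\frac{2m^2}{\pi}-\frac{16m^2}{\pi^3}\Bigr)\varepsilon + \mathcal{O}(\varepsilon^2),
\]
with a one-dimensional eigenspace generated by an explicit $\phi_0(t;\varepsilon)\in\mathbb{C}^2$; at leading order $\phi_0(t;0) = (\cos(\pi t/4),-\sin(\pi t/4))^{\top}$. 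This captures the first three terms in the theorem and leaves the contribution $\tfrac{2}{\pi}\mu_{2j}(q^{1D})\varepsilon$ to be produced by longitudinal corrections.

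\textbf{Step 3 (effective one-dimensional operator).} The main step uses min-max applied to $\widehat{D}_\varepsilon^{2}$. For product trial functions $\psi_f(s,t) = \phi_0(t;\varepsilon)f(s)$ with $f\in H^1(\mathbb{T},\mathbb{C}^2)$, integrating in $t$ and Taylor-expanding in $\varepsilon$ should yield
\[
\frac{\langle\widehat{D}_\varepsilon^{2}\psi_f,\psi_f\rangle}{\|\psi_f\|^2} = E_\perp(\varepsilon)^2 + \frac{q^{1D}[f]}{\|f\|^2_{L^2(\mathbb{T})}} + \mathcal{O}(\varepsilon)
\]
uniformly for $f$ in any fixed finite-dimensional subspace. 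The specific form of $q^{1D}$, in particular the shift $\kappa(\tfrac12-\tfrac1\pi)\sigma_3$ inside the square and the potential $-\kappa^2/\pi^2$, arises from the transverse moments $\int_{-1}^1|\phi_0|^2\,dt = 1$ and $\int_{-1}^1\phi_0^*\sigma_3\phi_0\,dt = 2/\pi$, the spin connection $-\kappa/2$, and the Pauli identity $\sigma_1\sigma_2 = i\sigma_3$. Restricting $f$ to the span of the first $2j$ eigenfunctions of the operator associated with $q^{1D}$ and then taking a square root produces the upper bound $E_j(\varepsilon) \le E_\perp(\varepsilon) + \tfrac{2\varepsilon}{\pi}\mu_{2j}(q^{1D}) + \mathcal{O}(\varepsilon^2)$.

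\textbf{Step 4 (lower bound and obstacles).} The matching lower bound is obtained by decomposing a generic test function as $\psi(s,t) = \phi_0(t;\varepsilon)f(s) + \psi^\perp(s,t)$ with $\psi^\perp$ orthogonal to $\phi_0$ in the $t$-variable at each $s$. The transverse gap of $\mathcal{T}_\varepsilon$ is of order $\varepsilon^{-1}$, so on vectors with bounded $\widehat{D}_\varepsilon^{2}$-form one has $\|\psi^\perp\|^2 = \mathcal{O}(\varepsilon^2)\|\psi\|^2$, and the cross terms between $\phi_0 f$ and $\psi^\perp$ are absorbed by weighted Cauchy--Schwarz. The main obstacle is the uniform $\mathcal{O}(\varepsilon^2)$ control of the remainder, which requires careful bookkeeping of the intermediate-scale curvature corrections throughout. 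Finally, the index $2j$ rather than $j$ reflects the fact that $q^{1D}$ splits on the $\sigma_3$-eigenspaces into two blocks $(-i\partial_s \pm \alpha)^2 - \kappa^2/\pi^2$ with $\alpha = \kappa(\tfrac12-\tfrac1\pi)$, intertwined by complex conjugation via the identity $(i\partial_s+\alpha)^2 = (-i\partial_s-\alpha)^2$, so each eigenvalue of $q^{1D}$ has multiplicity at least two and $\mu_{2j-1}(q^{1D}) = \mu_{2j}(q^{1D})$.
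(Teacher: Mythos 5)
The overall road map — square the Dirac operator, straighten the tube, expand the transverse eigenvalue, reduce to a one-dimensional quadratic form via projection onto the ground transverse mode, and sandwich the min-max levels — is the same as the paper's. Steps 1, 2, and the gap argument in Step 4 are consistent with what the paper does. However, there are two genuine problems in Steps 3 and 4.

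\textbf{First, the trial-function ansatz in Step 3 is ill-posed and misses the structure that produces $q^{1D}$.} You write $\psi_f(s,t)=\phi_0(t;\varepsilon)f(s)$ with $f\in H^1(\mathbb{T},\mathbb{C}^2)$, but $\phi_0(t)\in\mathbb{C}^2$ already carries the spinor indices, so multiplying it by a $\mathbb{C}^2$-valued $f$ does not yield a $\mathbb{C}^2$-valued function on the tube; and if $f$ were scalar the resulting reduction would land on $L^2(\mathbb{T},\mathbb{C})$, not on $\text{dom}(q^{1D})=H^1(\mathbb{T},\mathbb{C}^2)$. The paper's ansatz is the two-mode expansion $u=f^+\phi_{1,m\varepsilon}^++f^-\phi_{1,m\varepsilon}^-$, where $\phi_{1,m\varepsilon}^\pm$ are the transverse Dirac eigenfunctions for $\pm\lambda_1$ (which share the eigenvalue $\lambda_1^2$ of the squared transverse operator) and $f=(f^+,f^-)^\top\in H^1(\mathbb{T},\mathbb{C}^2)$. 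The $\sigma_3$ appearing in $q^{1D}$ acts on this two-mode label $(f^+,f^-)$, not on the spinor components: it arises because $\int_{-1}^1\langle\phi^+,\partial_s\phi^+\rangle\,dt$ and $\int_{-1}^1\langle\phi^-,\partial_s\phi^-\rangle\,dt$ have opposite signs (equations around \eqref{eqn:doubprod} in the paper). Your appeal to the Pauli identity $\sigma_1\sigma_2=i\sigma_3$ on the transverse spinor indices misidentifies where the $\sigma_3$ lives and, with a single transverse mode, the gauge term in the square would be a scalar, not $\sigma_3$-valued. Without the two-mode ansatz the effective form $q^{1D}$ does not arise.

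\textbf{Second, the explanation of the index $2j$ is not the right mechanism.} In the paper, both the upper and lower bounds are proved for $\mu_k(a)$ with the same index, namely $\mu_k(a)=\lambda_1(\mathcal{T}_\nu(m\varepsilon))^2/\varepsilon^2+\mu_k(q^{1D})+\mathcal{O}(\varepsilon)$; the passage to $\mu_{2j}(q^{1D})$ happens only at the very end because charge conjugation symmetry of $\mathcal{D}_\Gamma(\varepsilon)$ makes $E_j(\varepsilon)=\sqrt{\mu_{2j}(a)}$ (eigenvalues of $D^2$ come in pairs $E_j^2,E_j^2,\dots$). The double degeneracy of $q^{1D}$ you observe is a compatibility check — it shows that $\mu_k(a)$ and $\mu_k(q^{1D})$ can match index-by-index — but it is not the \emph{cause} of the $2j$; if you had started instead from $\mathcal{D}_\Gamma(\varepsilon)$ rather than its square you would need a separate argument. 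Framing it the other way, as you do, inverts cause and effect and would not by itself justify the formula in the theorem.
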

The proof of Theorem \ref{thm:1} is performed working with the quadratic form associated with the square of the operator $\mathcal{D}_\Gamma(\varepsilon)$. First, we rewrite the problem in tubular coordinates using the map $\Phi_\varepsilon$ introduced in \eqref{eqn:diffeo}. The obtained quadratic form is posed on a weighted $L^2$-space that we modify to obtain a flat metric. Roughly speaking, the resulting quadratic form is tensored up to higher order terms. Then, we use a standard reduction dimension argument by finding an upper and a lower bound for the eigenvalues of the square of $\mathcal{D}_\Gamma(\varepsilon)$ using the min-max principle. The main idea is that the first order term in the expansion is given by a transverse operator which can be explicitly analyzed and the constant order term involves a longitudinal operator defined via the quadratic form $q^{1D}$
 introduced in \eqref{eqn:fqlon}.

\subsection{Structure of the paper}
In section $2$ we give some spectral properties of a one dimensional transverse Dirac operator.\\
Section $3$ contains some auxiliary quadratic forms that we need to study the square of the operator $\mathcal{D}_{\Gamma}(\varepsilon)$.
The last section is devoted to prove Theorem $\ref{thm:1}$ by giving an upper and a lower bound for the eigenvalues of the square of the operator.

	\section{The transverse operator}
	In this section we investigate a one dimensional transverse operator wich appears in the study of our main operator.

	Let $\delta\geq0$ and  $x = (x_1,x_2)^\top \in \mathbb{S}^1$, we consider the one-dimensional transverse operator $\mathcal{T}_x(\delta)$ defined as follows
	\[
		\mathcal{T}_x(\delta) = -i (\sigma\cdot x) \frac{d}{dt} + \delta\sigma_3,\quad \text{dom}(\mathcal{T}_x(\delta)) = \{f \in H^1((-1,1),\mathbb{C}^2) : f_2(\pm1) = \pm i (x_1+ix_2) f_1(\pm1)\}.
	\]
The main result of this paragraph is the next proposition.

\begin{prop}\label{tran} The following holds.

\begin{enumerate}[label=(\roman*)]
	\item $\mathcal{T}_x(\delta)$ is self-adjoint, has compact resolvent, its spectrum is symmetric with respect to zero and does not depend on $x$. Moreover, if we denote by $\lambda_j(\mathcal{T}_x(\delta))$ the $j$-th positive eigenvalue of $\mathcal{T}_x(\delta)$ there holds
	\begin{equation}
	(2j-1)^2\frac{\pi^2}{16} \leq \lambda_j(\mathcal{T}_x(\delta))^2-\delta^2\leq j^2\frac{\pi^2}4.
	\label{eqn:encalj}
	\end{equation}
	If $\phi_{j,\delta}^+$ is a normalized eigenfunction associated with $\lambda_j(\mathcal{T}_x(\delta))$, then $\phi_{j,\delta}^- := \sigma_1 \overline{\phi_{j,\delta}^+}$ is a normalized eigenfunction associated with $-\lambda_j(\mathcal{T}_x(\delta))$.
	\item\label{exa} There holds
	$$\lambda_j(\mathcal{T}_x(\delta))=\frac{(2j-1)\pi}{4}+ \frac{2}{(2j-1)\pi}\delta+\left(\frac{-16}{(2j-1)^3\pi^3}+ \frac{2}{(2j-1)\pi}\right) \delta^2 +\mathcal{O}(\delta^3),$$
	\item \label{iii}For $j\in \mathbb{N}$, a normalized eigenfunction associated with $\pm\lambda_j(\mathcal{T}_x(\delta))$ verifies
	\begin{align}
		\phi_{j,\delta}^\pm(t) := \phi_{j}^\pm(t) + \mathcal{O}(\delta),\label{exp}
\end{align}
	where $\phi_j^+$ is defined as
	\begin{align}
		\phi_j^+(t) = \frac12 \cos\left( \frac{(2j-1)\pi}{4}(t+1)\right) \begin{pmatrix}1\\-i(x_1 + ix_2)\end{pmatrix} + \frac12 \sin\left( \frac{(2j-1)\pi}{4}(t+1)\right) \begin{pmatrix}1\\i(x_1+ix_2)\end{pmatrix}\label{phi}
	\end{align}
	and $\phi_j^- = \sigma_1 \overline{\phi_j^+}$. Here, the remainder is understood in the $L^\infty$-norm. 
	\item\label{itm:pt4} For all $f \in \text{dom}(\mathcal{T}_x(\delta))$ there holds
		\[
			\|\mathcal{T}_x(\delta) f\|_{L^2((-1,1),\mathbb{C}^2)}^2 = \|f'\|_{L^2((-1,1),\mathbb{C}^2)}^2 + \delta^2 \|f\|_{L^2((-1,1),\mathbb{C}^2)}^2 + \delta \big(|f(1)|^2 + |f(-1)|^2\big).
		\]
\end{enumerate}
\label{prop:transope}
\end{prop}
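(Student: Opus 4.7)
The plan is to build everything on the algebraic identity (iv), which I prove first. Expanding $|\mathcal{T}_x(\delta)f|^2$ pointwise, the diagonal parts give $|f'|^2+\delta^2|f|^2$, while the cross terms reorganize into the total derivative $i\delta\frac{d}{dt}\bigl[f^*(\sigma\cdot x)\sigma_3 f\bigr]$, owing to the anticommutation $\{\sigma\cdot x,\sigma_3\}=0$. Integrating and then evaluating the boundary term via the infinite-mass condition $f_2(\pm 1)=\pm i(x_1+ix_2)f_1(\pm 1)$ at each endpoint produces exactly $\delta(|f(1)|^2+|f(-1)|^2)$.

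For (i), I first reduce to the canonical case $x=e_1$ via the unitary $U_\theta=e^{-i\theta\sigma_3/2}$ with $x=(\cos\theta,\sin\theta)$: it satisfies $U_\theta(\sigma\cdot e_1)U_\theta^*=\sigma\cdot x$ and intertwines the two boundary conditions, yielding $\mathcal{T}_x(\delta)=U_\theta\mathcal{T}_{e_1}(\delta)U_\theta^*$ and the $x$-independence of the spectrum. Self-adjointness of $\mathcal{T}_{e_1}(\delta)$ is standard (the infinite-mass BC is a symmetric extension of a one-dimensional Dirac operator with zero deficiency indices) and the compact resolvent follows from $H^1((-1,1))\hookrightarrow L^2((-1,1))$. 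The charge conjugation $\mathcal{C}f:=\sigma_1\overline{f}$ preserves $\text{dom}(\mathcal{T}_x(\delta))$ and satisfies $\mathcal{C}\mathcal{T}_x(\delta)=-\mathcal{T}_x(\delta)\mathcal{C}$, which yields simultaneously the spectral symmetry and the relation $\phi_{j,\delta}^-=\sigma_1\overline{\phi_{j,\delta}^+}$. The two-sided bound (\ref{eqn:encalj}) follows from (iv) via the min-max principle applied to $\mathcal{T}_x(\delta)^2$: since the spectral symmetry forces $\lambda_j(\mathcal{T}_x(\delta))^2=\mu_{2j}(\mathcal{T}_x(\delta)^2)$, the upper bound comes from restricting the test space to $H^1_0((-1,1),\mathbb{C}^2)\subset\text{dom}(\mathcal{T}_x(\delta))$ (on which the boundary term in (iv) vanishes) and comparing with the $\mathbb{C}^2$-valued Dirichlet Laplacian, whose $(2j)$-th eigenvalue is $j^2\pi^2/4$; the lower bound uses that the boundary term in (iv) is non-negative for $\delta\geq 0$, so $\mu_{2j}(\mathcal{T}_x(\delta)^2)-\delta^2$ is monotone non-decreasing in $\delta$ and thus bounded below by its value at $\delta=0$, which is $(2j-1)^2\pi^2/16$ by the explicit computation below.

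For the explicit $\delta=0$ data and for (ii)--(iii), I solve $\mathcal{T}_{e_1}(\delta)\phi=\lambda\phi$ directly: the system gives $f_1''=-(\lambda^2-\delta^2)f_1$, so with $k=\sqrt{\lambda^2-\delta^2}$, writing $f_1=A\cos(kt)+B\sin(kt)$ and reading $f_2$ off algebraically, the two infinite-mass conditions split into the transcendental families $k\tan k=\lambda+\delta$ and $k\cot k=-(\lambda+\delta)$. At $\delta=0$ these reduce to $\tan k=\pm 1$, whose positive roots interlace to $\lambda_j(0)=(2j-1)\pi/4$; the associated normalized eigenfunctions are exactly (\ref{phi}), as one checks by substitution, and the general-$x$ formulas follow by conjugating with $U_\theta$. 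Because each $\lambda_j(0)$ is simple, Kato's theorem on self-adjoint analytic families of type (A) ensures that $\lambda_j(\delta)$ and the associated eigenprojector are real-analytic near $\delta=0$. The first-order coefficient in (ii) is the Feynman-Hellmann value $\langle\phi_j^+,\sigma_3\phi_j^+\rangle$, which equals $2/[(2j-1)\pi]$ by a short trigonometric integral; the $\delta^2$ coefficient is obtained by substituting the ansatz $\lambda_j(\delta)=(2j-1)\pi/4+c_1\delta+c_2\delta^2+\mathcal{O}(\delta^3)$ into the corresponding transcendental equation and matching orders. The $L^\infty$-remainder in (\ref{exp}) follows from the analyticity of the eigenvector in $H^1$ combined with the one-dimensional embedding $H^1\hookrightarrow L^\infty$.

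The main nuisance is extracting the $\delta^2$ coefficient in (ii): second-order Rayleigh-Schr\"odinger would require the reduced resolvent of $\mathcal{T}_x(0)$ paired against $\sigma_3\phi_j^+$, which is algebraically heavy. I would instead route it through the transcendental equation—Taylor-expanding $k=\sqrt{\lambda^2-\delta^2}$, $\sin$ and $\cos$ in $\delta$ to order two and matching coefficients—which is pure algebra and isolates $c_1,c_2$ cleanly.
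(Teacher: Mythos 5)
Your argument is essentially sound and, in places, genuinely different from the paper's, so a comparison is warranted.

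\emph{What you do differently.} For the self-adjointness and $x$-independence, the paper proves self-adjointness of $\mathcal{T}_x(0)$ directly (symmetric by integration by parts, then the adjoint domain computed by a distributional argument), whereas you reduce to the canonical direction $x=e_1$ by the unitary conjugation $U_\theta=e^{-i\theta\sigma_3/2}$. I checked: this intertwines both the differential expression and the boundary conditions (the phases $e^{\pm i\theta/2}$ conspire precisely with the appearance of $x_1+ix_2$ in the boundary condition), so that move is clean and also establishes the $x$-independence of the spectrum in one stroke. The bigger structural difference is in the proof of the two-sided bound \eqref{eqn:encalj}. The paper derives it by locating the unique root $x_p$ of the transcendental equation $\tan(2E)+E/\delta=0$ in the interval $\big((2p-1)\tfrac\pi2,\,p\pi\big)$; you instead get the upper bound by testing the quadratic form of $\mathcal{T}_x(\delta)^2$ (expanded via your item (iv)) on the span of the first $2j$ Dirichlet eigenfunctions of $(-1,1)$, and the lower bound by observing that $\|\mathcal{T}_x(\delta)f\|^2-\delta^2\|f\|^2=\|f'\|^2+\delta\big(|f(1)|^2+|f(-1)|^2\big)$ is monotone non-decreasing in $\delta$ on a $\delta$-independent form domain, so its min-max values are $\geq$ their value at $\delta=0$, which you compute explicitly. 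This is a more structural argument than root-locating, and it automatically handles the needed fact that $0\notin\mathrm{Sp}(\mathcal{T}_x(\delta))$, a prerequisite for the identification $\lambda_j(\mathcal{T}_x(\delta))^2=\mu_{2j}(\mathcal{T}_x(\delta)^2)$ that you use implicitly. For item (ii), the paper uses the implicit function theorem on $F(\mu,\delta)=2\delta\sin\mu+\mu\cos\mu$ and a Taylor expansion of the resulting branch; you use Kato analytic perturbation with Feynman–Hellmann for the $\delta$-coefficient (I verified $\langle\phi_j^+,\sigma_3\phi_j^+\rangle=\tfrac{2}{(2j-1)\pi}$) and propose Taylor-expanding the transcendental equation for the $\delta^2$-coefficient — this is essentially equivalent to the paper's implicit-function computation, but the Feynman–Hellmann route to the first coefficient is shorter.

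\emph{Two points to tighten.} First, the phrase ``zero deficiency indices'' is a slip: the minimal operator $-i(\sigma\cdot x)\tfrac{d}{dt}$ on $C_c^\infty((-1,1),\mathbb{C}^2)$ has deficiency indices $(2,2)$; what you mean is that the infinite-mass boundary condition selects a self-adjoint restriction of the maximal operator, which is the argument made either by von Neumann extension theory or directly as in the paper. Second, for the $L^\infty$-remainder in (iii) you appeal to analyticity of the eigenvector ``in $H^1$'' — standard Riesz-projector analyticity gives analyticity in $L^2$-norm; to upgrade to $H^1$ it is cleanest to note that the eigenvalue equation yields $\phi_{j,\delta}'=i(\sigma\cdot x)\big(\lambda_j(\delta)-\delta\sigma_3\big)\phi_{j,\delta}$, so $\phi_{j,\delta}'$ is $L^2$-analytic whenever $\phi_{j,\delta}$ and $\lambda_j(\delta)$ are, and then the embedding $H^1\hookrightarrow L^\infty$ finishes the job. (The paper avoids this entirely by writing the normalized eigenfunction in closed form and expanding it directly by the mean value theorem.) With those two phrases repaired, your proposal is a valid alternative proof.
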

The proof of the above proposition is similar to the proof of \cite[Prop.10]{th}, nevertheless we give it for the sake of completeness
\begin{proof}
Since the multiplication operator by $\sigma_3$ is bounded and self-adjoint in $L^2((-1,1),\mathbb{C}^2)$ the operator $\mathcal{T}_{x}(\delta)$ is self-adjoint if and only if  $\mathcal{T}_{x}(0)$ is self-adjoint. We can easily show that the operator $\mathcal{T}_{x}(0)$ is symmetric using an integration by parts.

For all $v\in\mathcal{D}:=C^{\infty}_0((-1,1),\mathbb{C}^2)$ and $u\in\text{dom}\mathcal{T}_{x}(0)^{*}$, there holds

\begin{align*}
\left\langle \mathcal{T}^*_x(0) u,v \right\rangle_{L^2((-1,1),\mathbb{C}^2)}=\left\langle  u, \mathcal{T}_x(0)v \right\rangle_{L^2((-1,1),\mathbb{C}^2)}&=\left\langle  u, -i\sigma\cdot v' \right\rangle_{L^2((-1,1),\mathbb{C}^2)}\\&=	\left\langle  u,\overline{ -i\sigma\cdot v'} \right\rangle_{\mathcal{D}',\mathcal{D}}\\&=	\left\langle -i\sigma\cdot u',\overline{v} \right\rangle_{\mathcal{D}',\mathcal{D}}\\&=	\left\langle \mathcal{T}_x(0)^* u,\overline{v}  \right\rangle_{\mathcal{D}',\mathcal{D}},
\end{align*}	
	where there $\left\langle .,.  \right\rangle_{\mathcal{D}',\mathcal{D}}$ is the duality bracket of distributions. In particular, we know that $\mathcal{T}_x(0)^*=-i\sigma\cdot u'\in L^2((-1,1),\mathbb{C}^2)$ thus we get $u\in H^1((-1,1),\mathbb{C}^2)$. Next, an integration by parts gives for $u=(u_1,u_2)^{\top}\in \text{dom}(\mathcal{T}_{x}(0)^*)$ and $v=(v_1,v_2)^{\top}\in \text{dom}(\mathcal{T}_{x}(0))$
\begin{multline*}
	\left\langle\mathcal{T}_{x}(0)^* u,v   \right\rangle _{L^2((-1,1),\mathbb{C}^2)}=\left\langle u,\mathcal{T}_{x}(0)v   \right\rangle _{L^2((-1,1)\mathbb{C}^2)}+(\overline{i(x_1+ix_2)}u_2(1)-u_1(1))\overline{v}_1(1)\\+(\overline{-i(x_1+ix_2)}u_2(-1)-u_1(-1))\overline{v}_1(-1) .
\end{multline*}
Recall that $v\in \text{dom}(\mathcal{T}_{x}(0))$, so necessarily there holds
$$ 0=(\overline{i(x_1+ix_2)}u_2(1)-u_1(1))\overline{v}_1(1)+(\overline{-i(x_1+ix_2)}u_2(-1)-u_1(-1))\overline{v}_1(-1).  $$
Since this is true for all $v\in \text{dom}(\mathcal{T}_{x}(0))$, we get $u_2(\pm1)=\pm i(x_1+ix_2) u_1(\pm 1)$ which yields $\mathcal{T}_{x}(0)^*=\mathcal{T}_{x}(0).$\\ The compactness of the resolvent of $\mathcal{T}_{x}(0)$ is a consequence of the graph theorem and the compactness of the embedding of $H^1((-1,1), \mathbb{C}^2)$ in $L^2((-1,1),\mathbb{C}^2)$. \\
Let $u\in\mathcal{T}_{x}(\delta)$, we have 
\begin{multline*}
	\| \mathcal{T}_{x}(\delta)u  \|^2_{L^2((-1,1),\mathbb{C}^2)}=\| -i\sigma\cdot x u' \|^2_{L^2((-1,1),\mathbb{C}^2)}+\delta^2\| u \|^2_{L^2((-1,1),\mathbb{C}^2)}\\+2\delta\Re(\left\langle   -i\sigma \cdot x u',\sigma_3u \right\rangle_{L^2((-1,1),\mathbb{C}^2)} )\geq \delta^2\| u  \|^2_{L^2((-1,1),\mathbb{C}^2)},
\end{multline*}
since an integration by parts and the boundary conditions gives us

\begin{align*}2\Re\big(\left\langle -i\sigma \cdot x u',\sigma_3u \right\rangle_{L^2((-1,1),\mathbb{C}^2)}\big)&=|u(1)|^2+|u(-1)|^2.
\end{align*} 
Note that using $\| -i\sigma\cdot x u' \|_{L^2((-1,1),\mathbb{C}^2)} = \| u' \|_{L^2((-1,1),\mathbb{C}^2)}$ proves Point \ref{itm:pt4}. By the min-max principle we deduce that if  $\lambda\in Sp(\mathcal{T}_{x}(\delta))$, then $|\lambda| \geq \delta$. Actually, the inequality is strict. Indeed, if $u$ is an eigenfunction of $\mathcal{T}_{x}(\delta)$ associated with $\lambda$ such that $|\lambda|=\delta$ we necessarily obtain that $u$ is a constant fuction on $(-1,1)$ verifying the boundary conditions. This not possible unless $u$ is $0$. We have shown that $Sp(\mathcal{T}_{x}(\delta))\cap[-\delta, \delta]=\emptyset$. \\Now, we compute its spectrum. To do so we pick  $\lambda\in Sp(\mathcal{T}_{x}(\delta))$ and chose a corresponding eigenfunction $u=(u_1,u_2)^{\top}\in \text{dom}(\mathcal{T}_{x}(\delta))$. We get  \begin{eqnarray}-u''_1=E^2u_1,\quad E^2=\lambda^2-\delta^2. \label{sol}\end{eqnarray}
  Taking into acount that $\delta\geq0$ and $E>0$, we obtain
$$u_1(t)=A\cos( E(t+1))+B\sin( E(t+1)), $$
where $A$, $B\in\mathbb{C}$ and since $\lambda+\delta\neq0$ there holds
$$u_2(t)=\frac{-i(x_1+ix_2) E}{\lambda+\delta}B \cos( E(t+1))+\frac{i(x_1+ix_2) E}{\lambda+\delta}A \sin( E(t+1)).$$
Using the boundary conditions at $t=-1$ we get
$$\sqrt{\lambda+\delta}A-\sqrt{\lambda-\delta}B=0.    $$
The boundary condition at $t=1$ gives
$$A(-\sqrt{\lambda+\delta}\cos2E+\sqrt{\lambda-\delta}\sin2E)-B(\sqrt{\lambda-\delta}\cos2E+\sqrt{\lambda+\delta}\sin2E)=0.  $$
To obtain a non-zero eigenfunction $u$, there must hold
\begin{align*}
	\left|  \begin{array}{cc}\sqrt{\lambda+\delta}&-\sqrt{\lambda-\delta} \\ \sqrt{\lambda-\delta}\sin2E-\sqrt{\lambda+\delta}\cos2E&-\sqrt{\lambda-\delta}\cos2E-\sqrt{\lambda+\delta}\sin2E  \end{array} \right|  =0
	\end{align*}
which yields the following implicit equation 
\begin{align}\delta\sin( 2E)+E\cos(2E)=0. \label{eq}
\end{align} 
In particular, this implies that the spectrum of $\mathcal{T}_x(\delta)$ is symmetric with respect to the origin. If $\delta=0$ then we have $E=|\lambda|=k\frac{\pi}{4}$, where $k\in\mathbb{N}$. If $\delta>0$, then we have $\cos(2E)\neq0$ and we get 

\begin{align}
\tan(2E)+\frac{E}{\delta}=0. \label{tanE}
\end{align}

Let us consider the following function 
\begin{align*}
	f_0:I_0=[0,\frac{\pi}{2})&\rightarrow\mathbb{R}\\x&\mapsto\tan(x)+\frac{x}{2\delta},
\end{align*}
we easily show that  $x=0$ is the unique solution to $f_0(x)=0$.

In the same way we define for $p\in \mathbb{N}$  the following function 

\begin{align*}
	f_p:I_p=((2p-1)\frac{\pi}{2},(2p+1)\frac{\pi}{2})&\rightarrow\mathbb{R}\\x&\mapsto\tan(x)+\frac{x}{2\delta}.
\end{align*}

Notice that $f'_p>0$ and 
\begin{align*}
	\lim\limits_{x\rightarrow(2p-1)\frac{\pi}{2}^+}f_p(x)=-\infty,\quad f_p(p\pi)=p\frac{\pi}{2\delta}>0.
\end{align*}
Thus, $f_p(x)=0$ admits a unique solution $x_p$ that verifies $x_p\in ((2p-1)\frac{\pi}{2},p\pi)$.
Using (\ref{tanE}) and (\ref{sol}), we obtain that $\lambda_p(T_x(\delta))^2 := \frac{x_p^2}4 + \delta^2$ is the $p$-th positive eigenvalue of  $T_x(\delta)$. In particular, we get \eqref{eqn:encalj}.

Now, consider the following $C^{\infty}$ function 
\begin{eqnarray}
F:\left\lbrace \begin{aligned} \mathbb{R}\times\mathbb{R}&\rightarrow \mathbb{R}\\(\mu,\delta)&\mapsto2\delta\sin(\mu)+\mu\cos\mu. \end{aligned}\right.
\end{eqnarray}
Since for $p\in \mathbb{N}$ we have $F(\frac{(2p-1)\pi}{2},0)=0$ and $\partial_{\mu}F(\frac{(2p-1)\pi}{2},0)=(-1)^{(p+1)}\frac{(2p-1)\pi}{2}$ one gets by the implicit function theorem that there exists $\theta_1,\theta_2>0$
and a $C^{\infty}$ function $\mu_p:(-\theta_1,\theta_1)\rightarrow(\frac{(2p-1)\pi}{2}-\theta_2,\frac{(2p-1)\pi}{2}+\theta_2)$ verifying $\mu_p(0)=(2p-1)\frac{\pi}{2}$ and such that for all $|\delta|<\theta_1$ we have $F(\mu_p(\delta),\delta)=0$. Moreover, when $\delta\rightarrow0$ we obtain
  
$$\mu_p(\delta)=\mu_p(0)+\mu_p'(0)\delta+\frac{\mu_p''(0)}2\delta^2+\mathcal{O}(\delta^3)=(2p-1)\frac{\pi}{2}+\frac{4}{(2p-1)\pi}\delta-\frac{32}{(2p-1)^3\pi^3}\delta^2+\mathcal{O}(\delta^3).$$
Hence, when $\delta\rightarrow 0$, we get
$$\lambda_p(\mathcal{T}_x(\delta))=\frac{(2p-1)\pi}{4}+ \frac{2}{(2p-1)\pi}\delta+\left(\frac{-16}{(2p-1)^3\pi^3}+ \frac{2}{(2p-1)\pi}\right) \delta^2 +\mathcal{O}(\delta^3),$$ this proves \ref{exa}. Finally we prove \ref{iii}.
For $p\in\mathbb{N}$, let $\phi^+_{p,\delta}$ be a corresponding normalized eigenfunction to $\lambda_p(\mathcal{T}_x(\delta))$ given by 
\begin{align}
	\phi^+_{p,\delta}(t)=\frac{1}{N_{p,\delta}^{\frac{1}{2}}}\cos(E_p(t+1))\left( \begin{array}c 1\\-i(x_1+ix_2) \end{array} \right)+\frac{1}{N_{p,\delta}^{\frac{1}{2}}}\sin(E_p(t+1))\left( \begin{array}c \frac{\lambda_p(\mathcal{T}_x(\delta))+\delta}{E_p}\\i(x_1+ix_2)\frac{E_p}{\lambda_p(\mathcal{T}_x(\delta))+\delta} \end{array} \right),\label{so}
\end{align}
where we have set $E_p=\sqrt{\lambda_p(\mathcal{T}_x(\delta))^2-\delta^2}$ and 
\begin{multline*} 
	N_{p,\delta}=2(1+\frac{1}{4E_p}\sin(4E_p))+(1-\frac{1}{4E_p}\sin(4E_p))\left(   \left(  \frac{\lambda_p(\mathcal{T}_x(\delta))+\delta}{E_p}  \right)^2+\left( \frac{E_p}{\lambda_p(\mathcal{T}_x(\delta))+\delta}   \right)^2   \right)\\+\frac{1}{2E_p}(1-\cos4E_p)\left(\frac{\lambda_p(\mathcal{T}_x(\delta))+\delta}{E_p}-\frac{E_p}{\lambda_p(\mathcal{T}_x(\delta))+\delta}  \right). 
\end{multline*}
Remark that when $\delta\rightarrow 0$
 we get
 
 \begin{align}
 \frac{1}{N_{p,\delta}^{\frac{1}{2}}}=\frac{1}{2}+\mathcal{O}(\delta),\quad \frac{\lambda_p(\mathcal{T}_x(\delta))+\delta}{E_p}=1+\mathcal{O}(\delta)\quad\text{ and }\frac{E_p}{\lambda_p(\mathcal{T}_x(\delta))+\delta}=1+\mathcal{O}(\delta)
 \label{so1}\end{align}
 and also that we have for all $t\in(-1,1)$
 \begin{align*}
 	\cos(E_p(t+1)) - \cos((2p-1)\frac\pi4(t+1)) &= \cos(\frac{\pi}4(2p-1)(t+1))(\cos(c_\delta(t+1)) - 1) \\&\qquad- \sin(\frac\pi4(2p-1)(t+1))\sin(c_\delta(t+1)),
 \end{align*}
 for some constant $c_\delta = \mathcal{O}(\delta)$ when $\delta \to 0$. By the mean value theorem applied to the cosine and sine functions we get
 \[
 	|\cos(E_p(t+1)) - \cos((2p-1)\frac\pi4(t+1))|\leq 2 |c_\delta (t+1)| \leq 4 |c_\delta|.
 \]
 Hence, $\cos(E_p(t+1)) = \cos((2p-1)\frac\pi4(t+1)) + \mathcal{O}(\delta)$ with a uniform remainder. Similarly, one would get $\sin(E_p(t+1)) = \sin((2p-1)\frac\pi4(t+1)) + \mathcal{O}(\delta)$. Hence, from this and from $(\ref{so})$ and $(\ref{so1})$ we get   
\[
	\phi^+_{p,\delta}(t) = \phi^+_p(t) + \mathcal{O}(\delta),
\] with a uniform remainder and we obtain $(\ref{exp})$.

\end{proof}

\section{Auxiliary quadratic forms}
In this section we give a two-sided estimate of the quadratic form of the square of the operator $\mathcal{D}_{\Gamma}(\varepsilon)$.

Let us introduce the quadratic form associated with $\mathcal{D}_\Gamma(\varepsilon)^2$ defined as
\[
a[u]:= \| D_{\Gamma}(\varepsilon)u \|^2,\quad \text{dom}(a):=\text{dom}(D_{\Gamma}(\varepsilon)).
\]We have the following proposition

\begin{prop}
	There exists $\varepsilon'>0$ and a constant $C>0$ such that for all $\varepsilon\in(0,\varepsilon')$ there exists a unitary map $U : L^2(\Omega_\varepsilon, \mathbb{C}^2) \to L^2(\mathbb{T}\times(-1,1),\mathbb{C}^2)$ such that for all $u \in \text{dom}(a)$ we have
	\[
	c^+[Uu] \leq a[u] \leq c^-[Uu],
	\]
where $c^\pm$ are the quadratic forms given by
\begin{multline*}
c^\pm[w] := (1\pm C \varepsilon)\int_{\mathbb{T}\times(-1,1)}|\partial_s w|^2dsdt - \int_{\mathbb{T}\times(-1,1)}\frac{\kappa^2}{4}|w|^2dsdt+(m^2\pm C\varepsilon) \|w\|^2 \\+ \frac{1}{\varepsilon^2}\Big(\int_\mathbb{T}\Big(\int_{-1}^1 |\partial_t w|^2 dt + m\varepsilon(|w(s,1)|^2 + |w(s,-1)|^2)\Big) ds\Big),
\\ \text{dom}(c^\pm)= U(\text{dom}(a))=\{u \in H^1(\mathbb{T}\times(-1,1),\mathbb{C}^2) : \forall s \in \mathbb{T}, \mp i\sigma_3\sigma\cdot\nu(s) u(s,\pm 1) = u(s,\pm1)\}.
\end{multline*}

 \end{prop}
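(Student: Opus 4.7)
I would proceed in three steps: derive a Bogolubov-type identity expressing $a[u]$ as a Laplacian-plus-boundary quadratic form; pull back to tubular coordinates via a unitary map incorporating $\sqrt{\text{Jacobian}}$; then exhibit a cancellation of two $\mathcal{O}(1/\varepsilon)$ boundary-curvature contributions that leaves precisely the structure of $c^\pm$ up to $\mathcal{O}(\varepsilon)$ remainders.

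\emph{Bogolubov identity.} Since $\{\sigma_j,\sigma_k\} = 2\delta_{jk}$ and $\{\sigma_3,\sigma_j\} = 0$ for $j \in \{1,2\}$, one has $\mathcal{D}_\Gamma(\varepsilon)^2 = -\Delta + m^2$ on smooth functions. Integrating by parts and simplifying the spin bilinears on $\partial\Omega_\varepsilon$ using the boundary condition --- concretely, $\sigma\cdot\tau_\varepsilon\, u = u$ on $\partial\Omega_\varepsilon$, which follows from $-i\sigma_3\sigma\cdot\nu_\varepsilon u = u$ and the 2D Pauli identity $\sigma\cdot\nu_\varepsilon\,\sigma\cdot\tau_\varepsilon = i\sigma_3$ (with $\tau_\varepsilon$ the positively oriented unit tangent of $\partial\Omega_\varepsilon$) --- produces
\[
a[u] = \int_{\Omega_\varepsilon}\bigl(|\nabla u|^2 + m^2|u|^2\bigr)\,dx + \int_{\partial\Omega_\varepsilon}\Bigl(m - \tfrac{\kappa_\partial}{2}\Bigr)|u|^2\,d\sigma,
\]
where $\kappa_\partial$ denotes the signed curvature of $\partial\Omega_\varepsilon$ relative to $\nu_\varepsilon$.

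\emph{Change of variables and cancellation.} Set $\alpha(s,t) := 1 - \varepsilon t\kappa(s)$; the Jacobian of $\Phi_\varepsilon$ is $\varepsilon\alpha$ and the pulled-back metric is $\mathrm{diag}(\alpha^2,\varepsilon^2)$. Choosing $\varepsilon'>0$ so that $\alpha \in [1/2,3/2]$ on $\mathbb{T}\times(-1,1)$, define
\[
(Uu)(s,t) := \sqrt{\varepsilon\alpha(s,t)}\,u(\Phi_\varepsilon(s,t)),
\]
which is unitary onto $L^2(\mathbb{T}\times(-1,1),\mathbb{C}^2)$. Since $\nu_\varepsilon = \pm\nu(s)$ at $t=\pm 1$, the scalar $\sqrt{\varepsilon\alpha}$ commutes with the matrices and the infinite-mass boundary condition on $u$ transfers verbatim to the one defining $\mathrm{dom}(c^\pm)$. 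Setting $w = Uu$ and $v = u\circ\Phi_\varepsilon = w/\sqrt{\varepsilon\alpha}$, expanding $\int_{\Omega_\varepsilon}|\nabla u|^2 dx = \int\bigl(\tfrac{\varepsilon}{\alpha}|\partial_s v|^2 + \tfrac{\alpha}{\varepsilon}|\partial_t v|^2\bigr)ds\,dt$ via the chain rule and integrating by parts in $t$ (there is no $s$-boundary) produces the principal terms $\int|\partial_s w|^2/\alpha^2 + |\partial_t w|^2/\varepsilon^2$, the effective potential $-\int\kappa^2|w|^2/(4\alpha^2)$, an $s$-derivative residual $R_s(w)$ with $|R_s(w)| \le C\varepsilon(\|\partial_s w\|^2 + \|w\|^2)$ (absorbed using $\|\kappa'\|_\infty$ and Cauchy-Schwarz), and the $t$-boundary term
\[
B(w) := \tfrac{1}{2\varepsilon}\int_\mathbb{T}\bigl[\tfrac{\kappa}{1-\varepsilon\kappa}|w(s,1)|^2 - \tfrac{\kappa}{1+\varepsilon\kappa}|w(s,-1)|^2\bigr]ds.
\]
Meanwhile, parametrising $\partial\Omega_\varepsilon$ through $\Phi_\varepsilon(\cdot,\pm 1)$ with $\kappa_\partial^\pm = \pm\kappa/(1\mp\varepsilon\kappa)$ and surface element $(1\mp\varepsilon\kappa)\,ds$ gives
\[
\int_{\partial\Omega_\varepsilon}\Bigl(m - \tfrac{\kappa_\partial}{2}\Bigr)|u|^2 d\sigma = \tfrac{m}{\varepsilon}\int_\mathbb{T}\bigl(|w(s,1)|^2 + |w(s,-1)|^2\bigr)ds - B(w).
\]
The two copies of $B(w)$ cancel exactly. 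Using $|1/\alpha^2 - 1|$ and $|\kappa^2/\alpha^2 - \kappa^2| \le C\varepsilon$ uniformly and collecting residuals into $\pm C\varepsilon(\|\partial_s w\|^2 + \|w\|^2)$ yields the two-sided estimate $c^+[Uu]\le a[u]\le c^-[Uu]$.

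\emph{Main obstacle.} The decisive point is the cancellation of $B(w)$: a wrong sign in the $-\kappa_\partial/2$ term of the Bogolubov identity would leave an uncontrolled $\mathcal{O}(1/\varepsilon)$ boundary-curvature contribution in $a[u]$ that cannot be absorbed into the $\mathcal{O}(\varepsilon)$ errors of $c^\pm$. This sign is forced by the simplification $\sigma\cdot\tau_\varepsilon u = u$ on $\partial\Omega_\varepsilon$, which is the only genuinely Dirac-specific ingredient of the argument; once it is in place, the remaining work is routine bookkeeping in tubular coordinates.
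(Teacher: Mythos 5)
Your proposal is correct and follows essentially the same route as the paper: the Bogolubov-type identity you derive from $\{\sigma_j,\sigma_k\}=2\delta_{jk}$ and the boundary simplification $\sigma\cdot\tau_\varepsilon u=u$ is exactly the identity the paper imports from Lotoreichik--Ourmi\`eres-Bonafos, and your unitary $U$ with the $\sqrt{\varepsilon\alpha}$ factor is precisely the paper's $U=U_2U_1$. The cancellation of the $\mathcal{O}(1/\varepsilon)$ boundary-curvature terms that you flag as the decisive point corresponds in the paper to the $t$-integration by parts on $\kappa\,\partial_t(|w|^2)/(2\varepsilon(1-\varepsilon t\kappa))$ matching the pulled-back $-\kappa_\partial/2$ contribution, leaving the effective potential $-\kappa^2/(4\alpha^2)$ and the $m/\varepsilon$ boundary term intact.
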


\begin{proof}
By adapting the proof of \cite[Lemma 3.3, Prop. 3.5]{LOB}, for all $u\in \text{dom}(a)$, there holds
\begin{multline*}
	a[u] = \int_{\Omega_\varepsilon} |\nabla u|^2 dx + m^2 \|u\|^2 + \int_{0}^\ell\Big(m + \frac{\kappa(s)}{2(1+\varepsilon\kappa(s))}\Big)|u(\Phi_\varepsilon(s,-1))|^2(1+\varepsilon\kappa(s)) ds\\+ \int_0^\ell (m - \frac{\kappa(s)}{2(1-\varepsilon\kappa(s))})|u(\Phi_\varepsilon(s,1))|^2(1-\varepsilon \kappa(s)) ds.
\end{multline*}
Consider the unitary map $U_1$ defined as follows
\[
	U_1 : L^2(\Omega_\varepsilon) \to L^2(\mathbb{T}\times(-1,1),g_\varepsilon(s,t) ds dt),\quad (U_1 u)(s,t) := u (\Phi_\varepsilon(s,t)).
\]
where the metric $g_{\varepsilon}$ is given by $g_\varepsilon(s,t) := \varepsilon (1-\varepsilon t \kappa(s))$. In particular, for all $v \in U_1(\text{dom}(a))$ there holds
\begin{multline*}
	b[v]:=a[U_1^{-1} v] = \int_{\mathbb{T}\times(-1,1)}\Big(\frac1{(1-\varepsilon t\kappa(s))^2}|\partial_s v|^2 + \frac1{\varepsilon^2}|\partial_t v|^2\Big)g_\varepsilon(s,t) ds dt \\+ \frac1\varepsilon\int_0^\ell\Big((m + \frac{\kappa(s)}{2(1+\varepsilon\kappa(s))})|v(s,-1)|^2 {g_\varepsilon(s,-1)} \Big)ds\\+ \frac1\varepsilon\int_0^\ell\Big((m - \frac{\kappa(s)}{2(1-\varepsilon\kappa(s))})|v(s,1)|^2 {g_\varepsilon(s,1)} \Big)ds + m^2 \int_{\mathbb{T}\times(-1,1)}\Big(|v|^2g_\varepsilon(s,t)\Big)ds dt.
\end{multline*}
Now, consider the unitary map
\[
	U_2 : L^2(\mathbb{T}\times(-1,1),g_{\varepsilon}(s,t)ds dt) \to L^2(\mathbb{T}\times(-1,1)),\quad (U_2v)(s,t) := \sqrt{g_\varepsilon(s,t)}v.
\]
For $w = \sqrt{g_\varepsilon}v$, there holds
\[
	(\partial_t v)(s,t) = \partial_t (\frac1{\sqrt{g_\varepsilon(s,t)}}w(s,t)) = \frac{\varepsilon^2 \kappa(s)}{2g_\varepsilon(s,t)^{\frac32}}w(s,t) + \frac1{\sqrt{g_\varepsilon(s,t)}}\partial_t w(s,t)
\]
and
\[
	(\partial_s v)(s,t) = \partial_s (\frac1{\sqrt{g_\varepsilon(s,t)}}w(s,t)) = \frac{\varepsilon^2t \kappa'(s)}{2g_\varepsilon(s,t)^{\frac32}}w(s,t) + \frac1{\sqrt{g_\varepsilon(s,t)}}\partial_s w(s,t).
\]
In particular we get
\[
	\frac1{\varepsilon^2}|\partial_t v|^2 g_\varepsilon(s,t)= \frac1{\varepsilon^2}|\partial_t w |^2 + \frac{\kappa^2}{4 (1-\varepsilon t \kappa)^2}|w|^2 + \frac{\kappa}{2\varepsilon(1-\varepsilon t\kappa)}\partial_t(|w|^2)
\]
and
\[
 	\frac1{(1-\varepsilon t \kappa)^2} |\partial_s v|^2 g_\varepsilon(s,t) = \frac{1}{(1-\varepsilon t \kappa)^2}|\partial_s w|^2 + \frac{\varepsilon^2 t^2 (\kappa')^2}{4(1-\varepsilon t\kappa)^4}|w|^2 + \frac{\varepsilon t \kappa'}{2(1-\varepsilon t \kappa)^3}\partial_s(|w|^2).
\]
Remark that an integration by parts yields
\begin{multline*}
	\int_{-1}^1\frac{\kappa}{2\varepsilon(1-\varepsilon t \kappa)} \partial_t(|w|^2) dt = - \int_{-1}^1 \frac{\kappa^2}{2(1-\varepsilon t\kappa)^2}|w|^2 dt + \frac{\kappa}{2\varepsilon(1-\varepsilon \kappa)}|w(s,1)|^2\\  - \frac{\kappa}{2\varepsilon(1+\varepsilon \kappa)}|w(s,-1)|^2,\end{multline*}
as well as
\[
	\int_{\mathbb{T}}\frac{\varepsilon t \kappa'}{2(1-\varepsilon t \kappa)^3}\partial_s(|w|^2)ds = - \varepsilon\int_{\mathbb{T}}\frac{ t \kappa''}{2(1-\varepsilon t \kappa)^3}|w|^2ds - \varepsilon^2\int_{\mathbb{T}} \frac{3 t^2 (\kappa')^2}{2(1-\varepsilon t \kappa)^4}|w|^2  ds.
\]
Now, pick $w \in U_2(U_1\text{dom}(a))$ and set $v = U_2^{-1}w$. In particular, remark that it yields
\begin{multline*}
	c[w] := b[U_2^{-1}w] = \int_{\mathbb{T}\times(-1,1)}\Big(\frac{1}{(1-\varepsilon t \kappa)^2}|\partial_s w|^2 -\varepsilon \frac{t \kappa''}{2(1-\varepsilon t \kappa)^3}|w|^2 - \varepsilon^2\frac54\frac{t^2(\kappa')^2}{(1-\varepsilon t \kappa)^4} |w|^2\Big) ds dt\\ - \int_{\mathbb{T}\times(-1,1)}\Big(\frac{\kappa^2}{4(1-\varepsilon t \kappa)^2}|w|^2\Big) ds dt +\int_{\mathbb{T}\times (-1,1)} \Big(\frac1{\varepsilon^2}|\partial_t w|^2\Big) ds dt\\ + \frac{m}\varepsilon\int_{\mathbb{T}} \Big(|w(s,-1)|^2 + |w(s,1)|^2\Big) ds + m^2 \| w\|^2.
\end{multline*}
Now, using that $\kappa,\kappa'$ and $\kappa''$ are in $L^\infty(\mathbb{T})$, $|t|<1$ and taking $U=U_2U_1$ we get the proposition. 
\end{proof}

\section{Proof of Theorem \ref{thm:1}}

In this section we will prove the main theorem by deriving an upper and a lower bound for the eigenvalues of the square of  $\mathcal{D}_{\Gamma}(\varepsilon)$.

\subsection{Upper bound} The aim of this paragraph is to prove the following proposition.
\begin{prop}\label{up}
Let $j \in \mathbb{N}$, there exists $\varepsilon_1 > 0$ such that for all $\varepsilon \in (0,\varepsilon_1)$ 
\[
	\mu_j(a) \leq  \frac{\lambda_1(\mathcal{T}_\nu(m\varepsilon))^2}{\varepsilon^2}+ \mu_j(q^{1D}) + \mathcal{O}(\varepsilon).
\]
\end{prop}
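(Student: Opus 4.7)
The strategy is the standard trial-function approach via min-max. Using the already-established bound $a[u] \le c^-[Uu]$ and the fact that $U$ is unitary, it suffices to exhibit a $j$-dimensional subspace $W\subset\text{dom}(c^-)$ on which the Rayleigh quotient of $c^-$ is bounded by the claimed right-hand side; pulling $W$ back by $U^{-1}$ then yields a valid test subspace for $a$. The natural choice is
\[
	w^{(k)}(s,t) := f^{(k)}_1(s)\,\phi^+_{1,m\varepsilon}(s,t) + f^{(k)}_2(s)\,\phi^-_{1,m\varepsilon}(s,t),\quad k = 1,\dots,j,
\]
where $\phi^\pm_{1,m\varepsilon}(s,\cdot)$ is the transverse eigenfunction of Proposition~\ref{tran} at $x = \nu(s)$ and $f^{(1)},\dots,f^{(j)}\in H^1(\mathbb{T},\mathbb{C}^2)$ are the first $j$ eigenfunctions of the operator associated to $q^{1D}$, orthonormalized in $L^2(\mathbb{T},\mathbb{C}^2)$. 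The boundary condition of $\text{dom}(c^-)$ holds because $\phi^\pm_{1,m\varepsilon}$ satisfies the transverse analogue for every $s$, and the orthonormality of $\phi^+_{1,m\varepsilon}(s,\cdot)$ and $\phi^-_{1,m\varepsilon}(s,\cdot)$ in $L^2((-1,1),\mathbb{C}^2)$ forces $\|w^{(k)}\|^2 = \|f^{(k)}\|^2_{L^2(\mathbb{T})}$ and $\dim W = j$.

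For the transverse part of $c^-[w^{(k)}]$, I would apply point~\ref{itm:pt4} of Proposition~\ref{tran} at every $s$ with $x = \nu(s)$ and $\delta = m\varepsilon$: because $w^{(k)}(s,\cdot)$ belongs to the $\lambda_1(\mathcal{T}_{\nu(s)}(m\varepsilon))^2$-eigenspace of $\mathcal{T}_{\nu(s)}(m\varepsilon)^2$, the $\partial_t$ and boundary terms collapse to $(\lambda_1(\mathcal{T}_\nu(m\varepsilon))^2/\varepsilon^2 - m^2)|f^{(k)}(s)|^2$ on each vertical fibre. Integrating in $s$ and combining with the $(m^2-C\varepsilon)\|w\|^2$ of $c^-$ yields $\lambda_1(\mathcal{T}_\nu(m\varepsilon))^2/\varepsilon^2 \cdot \|f^{(k)}\|^2 + \mathcal{O}(\varepsilon)\|f^{(k)}\|^2$.

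For the longitudinal part $(1-C\varepsilon)\int|\partial_s w^{(k)}|^2 - \int\tfrac{\kappa^2}{4}|w^{(k)}|^2$, I would first use point~\ref{iii} of Proposition~\ref{tran} to replace $\phi^\pm_{1,m\varepsilon}$ by their $\delta = 0$ principal parts $\phi^\pm_1$ at the cost of a uniform $\mathcal{O}(\varepsilon)$ remainder, then differentiate the explicit formula~(\ref{phi}) in $s$; the $s$-dependence enters only through $\omega(s) := \nu_1(s) + i\nu_2(s)$, whose derivative is controlled by the Frenet identity $\nu'(s) = -\kappa(s)\gamma'(s)$. A direct inner-product computation on the fibre $L^2((-1,1),\mathbb{C}^2)$ then gives
\[
	\langle \phi^+_1,\partial_s\phi^+_1\rangle = i\kappa\bigl(\tfrac12-\tfrac1\pi\bigr),\ \ \langle \phi^-_1,\partial_s\phi^-_1\rangle = -i\kappa\bigl(\tfrac12-\tfrac1\pi\bigr),\ \ \|\partial_s\phi^\pm_1\|^2 = \kappa^2\bigl(\tfrac12-\tfrac1\pi\bigr),
\]
with all mixed inner products $\langle \phi^+_1,\phi^-_1\rangle$, $\langle \phi^+_1,\partial_s\phi^-_1\rangle$, $\langle \partial_s\phi^+_1,\phi^-_1\rangle$, $\langle \partial_s\phi^+_1,\partial_s\phi^-_1\rangle$ vanishing. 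Expanding $|\partial_s w^{(k)}|^2$ and integrating in $t$, the diagonal $\kappa^2(\tfrac12-\tfrac1\pi)$ from $\|\partial_s\phi^\pm_1\|^2$ splits as $\kappa^2(\tfrac12-\tfrac1\pi)^2$ completing the square $|(f^{(k)})' + i\kappa(\tfrac12-\tfrac1\pi)\sigma_3 f^{(k)}|^2$, plus a residual $\kappa^2(\tfrac14-\tfrac1{\pi^2})$ which exactly cancels the geometric $-\kappa^2/4$ of $c^-$ to leave the $-\kappa^2/\pi^2$ of $q^{1D}$. The $(1-C\varepsilon)$ prefactor and the $\mathcal{O}(\varepsilon)$ replacement both contribute $\mathcal{O}(\varepsilon)\|f^{(k)}\|^2_{H^1}$ errors, which are controlled by $\mathcal{O}(\varepsilon)\|f^{(k)}\|^2_{L^2}$ because $f^{(k)}$ lies in a fixed finite-dimensional eigenspace.

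Combining both parts and using $q^{1D}[f]/\|f\|^2 \le \mu_j(q^{1D})$ for every $f\in\text{span}(f^{(1)},\dots,f^{(j)})$ yields
\[
	\sup_{w\in W\setminus\{0\}}\frac{c^-[w]}{\|w\|^2} \leq \frac{\lambda_1(\mathcal{T}_\nu(m\varepsilon))^2}{\varepsilon^2} + \mu_j(q^{1D}) + \mathcal{O}(\varepsilon),
\]
and the proposition follows from the min-max principle. The main technical obstacle lies in computing the longitudinal inner products above and noticing the algebraic matching by which $\kappa^2(\tfrac14-\tfrac1{\pi^2}) - \kappa^2/4 = -\kappa^2/\pi^2$ reproduces the curvature potential in $q^{1D}$; any deviation from the specific coefficient $\tfrac12-\tfrac1\pi$ would break this cancellation and spoil the identification of the effective operator.
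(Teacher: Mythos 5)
Your proof is correct and follows essentially the same route as the paper: the same trial functions $f^+\phi^+_{1,m\varepsilon}+f^-\phi^-_{1,m\varepsilon}$, the same use of Proposition~\ref{tran}~\ref{itm:pt4} to collapse the transverse part, the same inner-product computations yielding the $\kappa^2\bigl(\tfrac12-\tfrac1\pi\bigr)$ coefficient and the cancellation against $-\kappa^2/4$ to produce $-\kappa^2/\pi^2$, and the same min-max conclusion. The only genuine variation is how you absorb the $\mathcal{O}(\varepsilon)\|f'\|^2$ error: you take $f^{(1)},\dots,f^{(j)}$ to be fixed $q^{1D}$-eigenfunctions and invoke finite-dimensionality, whereas the paper keeps $f\in H^1(\mathbb{T},\mathbb{C}^2)$ arbitrary and controls $\|f'\|^2$ by $\|f'+i\kappa(\tfrac12-\tfrac1\pi)\sigma_3 f\|^2+\|f\|^2$ via \eqref{eqn:ubfpri} before applying min-max; both are valid, and the sign discrepancy in your $\langle\phi^+_1,\partial_s\phi^+_1\rangle$ is merely an inner-product-convention artifact that is harmless since the two sign choices of $q^{1D}$ are unitarily equivalent under $\sigma_1$.
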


In order to give the proof we need the folowing lemma
\begin{lem}\label{cd}
Let $\phi_{1,m\varepsilon}^\pm$ be the eigenfunctions of the transverse operator $\mathcal{T}_{\nu} (m\varepsilon)$ given in \ref{iii} of Proposition \ref{prop:transope}. Then, we have 
\begin{align}
\partial_s	\phi_{1,m\varepsilon}^\pm(t) := \partial_s\phi_{1}^\pm(t) + \mathcal{O}(\varepsilon),\nonumber\\
\partial_s^2	\phi_{1,m\varepsilon}^\pm(t) := \partial_s^2\phi_{1}^\pm(t) + \mathcal{O}(\varepsilon)\label{exp2}
\end{align}
where
\begin{align}
\partial_s\phi_1^+(t) = \frac12 \cos(\frac\pi4(t+1))\begin{pmatrix}0\\\kappa n\end{pmatrix} + \frac12\sin(\frac\pi4(t+1))\begin{pmatrix}0\\-\kappa n\end{pmatrix}\label{phi}
\end{align}
and
\[
	\partial_s^2\phi_1^+(t) = \frac12 \cos(\frac\pi4(t+1))\begin{pmatrix}0\\\partial_s(\kappa n)\end{pmatrix} + \frac12\sin(\frac\pi4(t+1))\begin{pmatrix}0\\-\partial_s(\kappa n)\end{pmatrix}\label{phip}
\]
and for $j \in \{1,2\}$, $\partial_s^j\phi_1^- = \sigma_1 \overline{\partial_s^j\phi_1^+}$ with $n=\nu_1+i\nu_2$. Here, the remainder is understood in the $L^\infty$-norm. 
\end{lem}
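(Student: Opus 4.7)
The plan is to differentiate the explicit formula~\eqref{so} for $\phi_{1,m\varepsilon}^+$ directly in $s$. The preliminary observation is that in~\eqref{so} only the complex number $n(s)=\nu_1(s)+i\nu_2(s)$ depends on $s$: by part~(i) of Proposition~\ref{prop:transope} the eigenvalue $\lambda_1(\mathcal{T}_{\nu(s)}(m\varepsilon))$ does not depend on $x=\nu(s)$; the quantity $E_1=\sqrt{\lambda_1^2-(m\varepsilon)^2}$ is then a scalar depending only on $m\varepsilon$; and the normalization $N_{1,m\varepsilon}$ depends only on $E_1$, $\lambda_1$ and $m\varepsilon$. Since $\gamma\in C^4$, the map $s\mapsto n(s)$ lies in $C^3(\mathbb{T})$, so $\partial_s$ and $\partial_s^2$ are well defined pointwise.

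The key geometric input comes from Frenet. Because $(\gamma',\nu)$ is a positively-oriented orthonormal frame, in complex notation one has $n=i(\gamma_1'+i\gamma_2')$, and combined with $\gamma''=\kappa\nu$ this yields
\[
\partial_s n = i\kappa n,\qquad \partial_s^2 n = i\kappa' n - \kappa^2 n,
\]
with $\kappa,\kappa'\in L^\infty(\mathbb{T})$ by the $C^4$-regularity of $\gamma$. Differentiating~\eqref{so} in $s$ and substituting $\partial_s n = i\kappa n$ yields a closed-form expression in which the scalar prefactors $1/N_{1,m\varepsilon}^{1/2}$, $E_1/(\lambda_1+m\varepsilon)$, $\cos(E_1(t+1))$ and $\sin(E_1(t+1))$ are, by~\eqref{so1} together with the mean-value estimates already used at the end of the proof of Proposition~\ref{prop:transope}, equal up to uniform $\mathcal{O}(\varepsilon)$ remainders to $\tfrac12$, $1$, $\cos(\tfrac\pi4(t+1))$ and $\sin(\tfrac\pi4(t+1))$ respectively. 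Since $\kappa n$ is bounded independently of $s$, collecting the terms gives exactly the first line of~\eqref{exp2}.

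For $\partial_s^2\phi_{1,m\varepsilon}^+$ I differentiate once more. A second $s$-derivative either acts on an $s$-independent scalar factor (no contribution) or falls on $\kappa n$, producing $\partial_s(\kappa n)=\kappa' n+i\kappa^2 n$. Applying the same expansions as above and using $\kappa'\in L^\infty(\mathbb{T})$ yields the second line of~\eqref{exp2}, again uniformly in $(s,t)$. The statements for $\phi_{1,m\varepsilon}^-$ and $\partial_s^j\phi_1^-$ then follow at once from $\phi_{1,m\varepsilon}^-=\sigma_1\overline{\phi_{1,m\varepsilon}^+}$ and the fact that $\sigma_1 C$ commutes with $\partial_s$.

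The main (mild) obstacle is purely bookkeeping: one must verify that all $\mathcal{O}(\varepsilon)$ remainders are uniform in $s$ as well as in $t$. This uniformity is a consequence of (a) the $x$-independence of every scalar ingredient of formula~\eqref{so}, so that the $s$-derivative reaches only the factor $n$, and (b) the $L^\infty$-bounds on $\kappa$ and $\kappa'$ guaranteed by $\gamma\in C^4$. No new perturbation-theoretic input is required: the lemma is essentially a corollary of the explicit formulae of Proposition~\ref{prop:transope} combined with the Frenet identities.
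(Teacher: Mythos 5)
Your proof is correct and follows essentially the same approach as the paper: differentiate the explicit formula~\eqref{so} in $s$, observe that only the factor $n(s)=\nu_1+i\nu_2$ carries $s$-dependence (the eigenvalue and hence $E_1$, $N_{1,m\varepsilon}$ are $x$-independent by Proposition~\ref{prop:transope}(i)), use the Frenet identity $\partial_s n=i\kappa n$ to recognize the resulting factor $\partial_s^{j-1}(\kappa n)$, and then apply the same asymptotic expansions and mean-value estimates already established in the proof of Point~\ref{iii}. Your write-up simply spells out the supporting Frenet computation and $L^\infty$-bounds that the paper leaves implicit.
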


\begin{proof}
After remarking that for all $t\in(-1,1)$ and $j \in \{1,2\}$, we have
\[
	\partial_s^j\Phi_{1,m\varepsilon}^+(t) = \frac1{N_{1,m\varepsilon}^\frac12}\cos(E_1(m\varepsilon)(t+1)) \begin{pmatrix}0 \\ \partial_s^{j-1}(\kappa n)\end{pmatrix} + \frac1{N_{1,m\varepsilon}^\frac12}\sin(E_1(m\varepsilon)(t+1)) \begin{pmatrix}0\\ - \partial_s^{j-1}(\kappa n) \frac{E_1(m\varepsilon)}{\lambda_1(\mathcal{T}_\nu(m\varepsilon)) + m\varepsilon}\end{pmatrix}
\]
from this point the proof of Lemma \ref{cd} goes along the same lines as the proof of Point \ref{iii}~Proposition \ref{tran}.
%\begin{align*}
%|\partial_s\phi^+_{1,m\varepsilon}(t)-\partial_s\phi^+_1(t) |=&|\left( \cos(\frac\pi4(t+1))+\sin(\frac\pi4(t+1))\right) \mathcal O(\delta)(t+1)\left( \begin{array}c 0\\\kappa n\end{array}\right)  |.
%\end{align*}
% 
%Since $t\in(-1,1)$ we obtain $(\ref{exp2})$. 
\end{proof}
\begin{myproof}{Proposition} {\ref{up} }
Let $f^\pm \in H^1(\mathbb{T})$ and consider the trial-function $u = f^+\phi_{1,m\varepsilon}^+ + f^- \phi_{1,m\varepsilon}^-$. One notices that $u \in \text{dom}(c^+)$ and by definition of $u$ there holds
\[
	c^+[u] = (1+ C \varepsilon)\int_{\mathbb{T}\times(-1,1)}|\partial_s u|^2 dsdt - \int_{\mathbb{T}\times(-1,1)}\frac{\kappa^2}{4}|u|^2 + \frac{1}{\varepsilon^2}\lambda_1(\mathcal{T}_\nu(m\varepsilon))^2\|u\|^2 +C\varepsilon \|u\|^2.
\]
We remark that thanks to the orthogonality of $\phi_{1,m\varepsilon}^+$ and $\phi_{1,m\varepsilon}^-$ in $L^2((-1,1),\mathbb{C}^2)$ we have
\[
	\|u\|^2 = \int_{\mathbb{T}}\Big(|f^+|^2 + |f^-|^2\Big) ds = \|f\|^2,\quad \int_{\mathbb{T}\times(-1,1)}\Big(\frac{\kappa^2}{4}|u|^2\Big)dsdt= \int_{\mathbb{T}}\frac{\kappa^2}{4} |f|^2 ds,
\]
where we have set $f = \begin{pmatrix}f^+\\f^-\end{pmatrix}\in H^1(\mathbb{T},\mathbb{C}^2)$.
Moreover, there holds
\begin{multline}
	|\partial_s u|^2 = |(f^+)'\phi_{1,m\varepsilon}^+ + (f^-)'\phi_{1,m\varepsilon}^-|^2 + |f^+\partial_s(\phi_{1,m\varepsilon}^+) + f^-\partial_s(\phi_{1,m\varepsilon}^-)|^2 \\+ 2 \Re\Big(\langle(f^+)'\phi_{1,m\varepsilon}^+ + (f^-)'\phi_{1,m\varepsilon}^-,f^+\partial_s(\phi_{1,m\varepsilon}^+) + f^-\partial_s(\phi_{1,m\varepsilon}^-)\rangle\Big).
\label{fq}\end{multline}
In particular, we have
\begin{align}
	\int_{-1}^1\Big(|(f^+)'\phi_{1,m\varepsilon}^+ + (f^-)'\phi_{1,m\varepsilon}^-|^2\Big) dt = |f'|^2.\label{fc}
\end{align}
By Lemma $\ref{cd}$ we have
\begin{multline*}
	|f^+\partial_s(\phi_{1,m\varepsilon}^+) + f^-\partial_s(\phi_{1,m\varepsilon}^-)|^2 = \frac{\kappa^2}4 \Big(\cos(\frac{\pi}4(t+1)) - \sin(\frac{\pi}4(t+1))\Big)^2|f|^2 + |f|^2\mathcal{O}(\varepsilon)\\+2\Re (f^+\overline{f^-})\mathcal{O}(\varepsilon)
\end{multline*}
and
\begin{align}
	\int_{-1}^1\Big(|f^+\partial_s(\phi_{1,m\varepsilon}^+) + f^-\partial_s(\phi_{1,m\varepsilon}^-)|^2\Big) dt = \frac{\kappa^2}4\big(2-\frac4\pi\big)|f|^2 + |f|^2\mathcal{O}(\varepsilon)+2\Re (f^+\overline{f^-})\mathcal{O}(\varepsilon),\label{sc}
\end{align}
where all the remainder are uniform in the variable $t$.
Moreover, we obtain
\begin{multline*}
\langle(f^+)'\phi_{1,m\varepsilon}^+ + (f^-)'\phi_{1,m\varepsilon}^-,f^+\partial_s(\phi_{1,m\varepsilon}^+) + f^-\partial_s(\phi_{1,m\varepsilon}^-)\rangle = (f^+)'\overline{f^+} \langle \phi_{1,m\varepsilon}^+,\partial_s \phi_{1,m\varepsilon}^+\rangle + (f^+)' \overline{f^-}\langle\phi_{1,m\varepsilon}^+,\partial_s \phi_{1,m\varepsilon}^-\rangle\\ + (f^-)'\overline{f^+}\langle \phi_{1,m\varepsilon}^-,\partial_s \phi_{1,m\varepsilon}^+\rangle + (f^-)'\overline{f^-}\langle \phi_{1,m\varepsilon}^-,\partial_s \phi_{1,m\varepsilon}^-\rangle.
\end{multline*}
But remark that there holds
\[
	\int_{-1}^1\Big(\langle\phi_{1,m\varepsilon}^+,\partial_s \phi_{1,m\varepsilon}^+\rangle\Big)dt = \frac{i\kappa}4(\frac4\pi-2) + \mathcal{O}(\varepsilon)
\]
as well as
\[
	\int_{-1}^1\Big(\langle\phi_{1,m\varepsilon}^+,\partial_s \phi_{1,m\varepsilon}^-\rangle\Big)dt = \mathcal{O}(\varepsilon).
\]
Using that $\phi^-_{1,m\varepsilon}=\sigma_1\overline{\phi^+_{1,m\varepsilon}}$, we get that
\[
	\int_{-1}^1\Big(\langle\phi_{1,m\varepsilon}^-,\partial_s \phi_{1,m\varepsilon}^-\rangle\Big)dt = \frac{i\kappa}4(2-\frac4\pi) + \mathcal{O}(\varepsilon)
\]
and
\[
	\int_{-1}^1\Big(\langle\phi_{1,m\varepsilon}^-,\partial_s \phi_{1,m\varepsilon}^+\rangle\Big)dt = \mathcal{O}(\varepsilon).
\]
Hence, we obtain
\begin{multline}\int_{-1}^1  \langle(f^+)'\phi_{1,m\varepsilon}^+ + (f^-)'\phi_{1,m\varepsilon}^-,f^+\partial_s(\phi_{1,m\varepsilon}^+) + f^-\partial_s(\phi_{1,m\varepsilon}^-)\rangle\\= (f^+)'\overline{f^+}\frac{i\kappa}{4} (\frac4\pi-2)+(f^-)'\overline{f^-}\frac{i\kappa}{4} (2-\frac4\pi) + \mathcal{O}(\varepsilon)\big((f^+)'\overline{f^+} + (f^+)'\overline{f^-} + (f^-)'\overline{f^+} + (f^-)' \overline{f^-}\big)\\=\left\langle f', \frac{i\kappa}{4}(2-\frac4\pi)\sigma_3f \right\rangle+ \mathcal{O}(\varepsilon)\big((f^+)'\overline{f^+} + (f^+)'\overline{f^-} + (f^-)'\overline{f^+} + (f^-)' \overline{f^-}\big).  \label{eqn:doubprod} 
 \end{multline}
Substituting $(\ref{fc})$, $(\ref{sc})$ and the above equality in $(\ref{fq})$ we get, for some constant $c' > 0$
\begin{equation}
	\int_{-1}^1 |\partial_s u|^2 dt \leq |f' + i \kappa(\frac12 - \frac1\pi)\sigma_3 f|^2 + \kappa^2 \Big(\frac12 - \frac1\pi\Big)^2 |f|^2 + c' \varepsilon |f'|^2 + c'\varepsilon |f|^2.
	\label{eqn:ubdsu}
\end{equation}
Now, remark that
\begin{align}
	|f'|^2 &= |f' + i \kappa(\frac12 - \frac1\pi)\sigma_3 f - i\kappa(\frac12 - \frac1\pi)\sigma_3 f |^2\nonumber\\& = |f' + i \kappa(\frac12 - \frac1\pi)\sigma_3 f|^2 + \kappa^2\big(\frac12-\frac1\pi\big)^2 |f|^2 - 2 \Re\Big(\langle f' + i \kappa(\frac12 - \frac1\pi)\sigma_3 f, i\kappa(\frac12 - \frac1\pi)\sigma_3 f\rangle\Big)\nonumber\\
	& \leq  |f' + i \kappa(\frac12 - \frac1\pi)\sigma_3 f|^2 + c''|f|^2 + c''(|f' + i \kappa(\frac12 - \frac1\pi)\sigma_3 f|^2)\label{eqn:ubfpri},
\end{align}
for some constant $c'' > 0$, where we have used the Cauchy-Schwarz inequality to control the last term and the elementary identity $ab \leq \frac12 a^2 + \frac12b^2$ (for $a,b\in \mathbb{R}$).

%for some new constants $c',k > 0$, that
%\begin{multline}
%	c^+[u]\leq(1+ k\varepsilon)\left( \int_{\mathbb{T}}| f'|^2 + \kappa^2\left(  \frac{1}{2}-\frac{1}{\pi}\right) |f|^2 +2\Re\left( \left\langle  f', \frac{i\kappa}{4}(2-\frac4\pi)\sigma_3f \right\rangle\right)-\frac{\kappa^2}{4}|f|^2ds\right)  \\+(  \frac{1}{\varepsilon^2}\lambda_1(\mathcal{T}_\nu(m\varepsilon))^2+c'\varepsilon)\|f\|^2.  \label{eqn:upbfin} 
%\end{multline}
%Here, we have controlled the cross terms using the elementary identity $ab \leq \frac12 a^2 + \frac12 b^2$ for all $a,b \in \mathbb{R}$.
%
%Note that there holds
%\[
%	\|f' + i\kappa (\frac12 - \frac1\pi) \sigma_3 f\|^2 = \|f'\|^2 + \int_{\mathbb{T}}\Big(\kappa^2\big(\frac12-\frac1\pi\big)^2 |f|^2\Big)ds + 2\Re\left\langle  f', \frac{i\kappa}{4}(2-\frac4\pi)\sigma_3f \right\rangle.
%\]

Taking into account \eqref{eqn:ubdsu} and \eqref{eqn:ubfpri} we finally obtain for some new constants $k,k' > 0$ that
\[
	c^+[u] \leq (1+k\varepsilon)\int_{\mathbb{T}}\Big(|f' +i \kappa(\frac12-\frac1\pi)\sigma_3 f|^2-\frac{\kappa^2}{\pi^2}|f|^2\Big)ds + \big( \frac1{\varepsilon^2}\lambda_1(\mathcal{T}_\nu(m\varepsilon))^2 + k'\varepsilon)\big)\|f\|^2.
\]
Using the min-max principle one gets the expected result.
\end{myproof}

\subsection{Lower bound}

The goal of this paragraph is to prove the following proposition.

\begin{prop}\label{pr4}Let $j \in \mathbb{N}$, there exists $\varepsilon_1 > 0$ such that for all $\varepsilon \in (0,\varepsilon_1)$
\[
	\mu_j(a)\geq \frac{\lambda_1(\mathcal{T}_\nu(m\varepsilon))^2}{\varepsilon^2} + \mu_j(q^{1D}) + \mathcal{O}(\varepsilon).
\]
\end{prop}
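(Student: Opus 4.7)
The proof is dual to Proposition~\ref{up}, using the auxiliary-form comparison combined with the min-max principle in the opposite direction.

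By the previous proposition, the min-max principle reduces matters to proving $\mu_j(c^+) \geq \lambda_1(\mathcal{T}_\nu(m\varepsilon))^2/\varepsilon^2 + \mu_j(q^{1D}) + \mathcal{O}(\varepsilon)$. For $w \in \text{dom}(c^+)$, I perform the fiberwise decomposition
\[
	w(s,\cdot) = f^+(s)\,\phi_{1,m\varepsilon}^+ + f^-(s)\,\phi_{1,m\varepsilon}^- + g(s,\cdot),
\]
with $g(s,\cdot)$ orthogonal in $L^2((-1,1),\mathbb{C}^2)$ to the first-mode eigenfunctions. Since $\phi_{1,m\varepsilon}^\pm \in \text{dom}(\mathcal{T}_\nu(m\varepsilon))$, the projection preserves the infinite-mass boundary condition, and the orthogonality yields $\|\mathcal{T}_\nu(m\varepsilon) g(s,\cdot)\|^2 \geq \lambda_2(\mathcal{T}_\nu(m\varepsilon))^2 \|g(s,\cdot)\|^2$. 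Using Point~\ref{itm:pt4} of Proposition~\ref{prop:transope} to rewrite the transverse part, one obtains
\[
	c^+[w] \geq (1+C\varepsilon)\int|\partial_s w|^2 - \int\frac{\kappa^2}{4}|w|^2 + C\varepsilon\|w\|^2 + \frac{\lambda_1(\mathcal{T}_\nu(m\varepsilon))^2}{\varepsilon^2}\|f\|^2 + \frac{\lambda_2(\mathcal{T}_\nu(m\varepsilon))^2}{\varepsilon^2}\|g\|^2.
\]
The transverse spectral gap $(\lambda_2^2-\lambda_1^2)/\varepsilon^2 \geq 5\pi^2/(16\varepsilon^2)$, which diverges by~\eqref{eqn:encalj} as $\varepsilon\to 0$, is the key positive feature of the estimate.

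To apply min-max, I would take any $j$-dimensional $V \subset \text{dom}(c^+)$ and consider the projection $P\colon V \to H^1(\mathbb{T},\mathbb{C}^2)$, $w\mapsto f$. If $\ker P\neq\{0\}$, a pure high-mode element $w_0\in\ker P$ satisfies $c^+[w_0]/\|w_0\|^2 \geq \lambda_2^2/\varepsilon^2 - C$, far exceeding the claimed bound for $\varepsilon$ small. Otherwise $W:=P(V)$ has dimension $j$, and the min-max principle for $q^{1D}$ yields $f_*\in W$ with $q^{1D}[f_*] \geq \mu_j(q^{1D})\|f_*\|^2$. Evaluating $c^+$ at $w_*:=P^{-1}f_*$ and writing $\partial_s w_* = \alpha_* + \partial_s g_*$ (with $\alpha_*$ the first-mode contribution), the computations of the proof of Proposition~\ref{up} carry over as two-sided identities (modulo $\mathcal{O}(\varepsilon)$ remainders) and give $\int|\alpha_*|^2 = q^{1D}[f_*] + \int(\kappa^2/4)|f_*|^2 + \mathcal{O}(\varepsilon)(\|f_*\|^2+\|f_*'\|^2)$; this combines with the fiberwise $L^2$-orthogonality $\int_{-1}^1\langle\Pi w_*, g_*\rangle\,dt = 0$ (which kills the $\kappa^2/4$ and transverse cross terms) to deliver $c^+[w_*]/\|w_*\|^2 \geq \lambda_1^2/\varepsilon^2 + \mu_j(q^{1D}) + \mathcal{O}(\varepsilon)$ once the remaining $g_*$-cross terms are absorbed.

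The main obstacle is control of the cross term $2\Re\int\langle\alpha_*,\partial_s g_*\rangle$ arising from the expansion of $\int|\partial_s w_*|^2$, since the transverse energy bounds $\|\partial_t g_*\|$ and $\|g_*\|$ but not $\|\partial_s g_*\|$. The resolution splits $\alpha_* = \alpha_*^\parallel + \alpha_*^\perp$ and $\partial_s g_* = (\partial_s g_*)^\parallel + (\partial_s g_*)^\perp$ along the first-mode/high-mode decomposition. For the diagonal piece $\int\langle\alpha_*^\parallel,(\partial_s g_*)^\parallel\rangle$, the identity $\int_{-1}^1\langle\partial_s g_*,\phi_{1,m\varepsilon}^\pm\rangle\,dt = -\int_{-1}^1\langle g_*,\partial_s\phi_{1,m\varepsilon}^\pm\rangle\,dt$ obtained by $s$-differentiating the orthogonality relation gives a bound of order $(\|f_*\|+\|f_*'\|)\|g_*\|$; a bootstrap using the a priori upper bound $c^+[w_*]\leq\mathcal{O}(\varepsilon^{-2})\|w_*\|^2$ from Proposition~\ref{up} together with the transverse gap forces $\|g_*\|^2=\mathcal{O}(\varepsilon^2)\|w_*\|^2$, so this piece is $\mathcal{O}(\varepsilon)\|w_*\|^2$. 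For the off-diagonal piece, Young's inequality pairs $|\alpha_*^\perp|^2\leq C|f_*|^2$ (by Lemma~\ref{cd}) against $\|g_*\|^2$, and the loss is absorbed by the transverse-gap budget. All resulting errors are $\mathcal{O}(\varepsilon)\|w_*\|^2$, yielding the claimed lower bound on $\mu_j(c^+)$, hence on $\mu_j(a)$.
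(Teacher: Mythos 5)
Your overall strategy --- decompose $w$ along the first transverse eigenmodes, exploit the transverse spectral gap, then reduce to the effective one-dimensional form on the first-mode coefficients --- matches the paper's. However, two steps are problematic. First, the bootstrap is incorrectly justified. Proposition~\ref{up} gives $\mu_j(a) \leq \lambda_1(\mathcal{T}_\nu(m\varepsilon))^2/\varepsilon^2 + \mu_j(q^{1D}) + \mathcal{O}(\varepsilon)$, a statement about eigenvalues of $a$; it does \emph{not} imply $c^+[w_*] \leq \mathcal{O}(\varepsilon^{-2})\|w_*\|^2$ for a $w_*$ drawn from an arbitrary $j$-dimensional subspace $V$. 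For such a $w_*$, nothing a priori controls $c^+[w_*]$, so the claim $\|g_*\|^2 = \mathcal{O}(\varepsilon^2)\|w_*\|^2$ does not follow. The step could in principle be rescued by a dichotomy (either the Rayleigh quotient on $V$ already exceeds the target, or it does not and you then extract a priori smallness of $g_*$ on $V$), but as written it is circular. Second, the off-diagonal cross term $\langle\alpha_*^\perp, (\partial_s g_*)^\perp\rangle$ is not of size $\|f_*\|\,\|g_*\|$ as your Young pairing requires; the factor $\|\partial_s g_*\|$ is not controlled by the transverse energy, and, exactly as for the diagonal piece, one must integrate by parts in $s$ before Young's inequality can be applied. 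You carry out this integration by parts for the diagonal piece but omit it for the off-diagonal one.

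The paper's argument avoids both issues and renders the bootstrap unnecessary. The key auxiliary input is Lemma~\ref{lem1}, which shows that the commutator $[\partial_s, \Pi_\varepsilon]$ is uniformly bounded on $L^2$ and $H^1$ as $\varepsilon \to 0$. The cross term $2\Re\langle\partial_s\Pi_\varepsilon u,\partial_s\Pi_\varepsilon^\perp u\rangle$ is then written as $2\Re(J_1+J_2+J_3)$ with each $J_k$ expressed through such commutators; the dangerous factor $\partial_s\Pi_\varepsilon^\perp u$ is removed by a single $s$-integration by parts in $J_2$, and each $J_k$ is estimated with the weighted Young inequality $ab \leq \tfrac{\varepsilon}{2}a^2 + \tfrac{1}{2\varepsilon}b^2$. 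The resulting penalty $\tfrac{c}{\varepsilon}\|\Pi_\varepsilon^\perp u\|^2$ is absorbed directly into the gap term $\tfrac{\lambda_2(\mathcal{T}_\nu(m\varepsilon))^2-\lambda_1(\mathcal{T}_\nu(m\varepsilon))^2}{\varepsilon^2}\|\Pi_\varepsilon^\perp u\|^2$, without needing any prior smallness of $\|\Pi_\varepsilon^\perp u\|$. Finally, instead of your case analysis on $\ker P$, the paper compares with a direct-sum quadratic form $Q_1 = q_1\oplus q_2$ on $\Pi_\varepsilon L^2 \oplus \Pi_\varepsilon^\perp L^2$ and uses that the $q_2$-branch is eventually larger, so $\mu_j(Q_1)=\mu_j(q_1)$; this is the cleaner formalization of exactly what your kernel argument is trying to achieve. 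Replacing the bootstrap with the $\varepsilon$-weighted Young absorption and adding the missing off-diagonal integration by parts would close your argument along essentially the same lines as the paper.
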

In order to prove the above proposition we need the next lemma. 
\begin{lem}\label{lem1}Let $\Pi_{\varepsilon}$ be the orthogonal projectors initialy defined for $u \in L^2((-1,1),\mathbb{C}^2)$ by
	\[
	\Pi_{\varepsilon} u = \langle u, \phi_{1,m\varepsilon}^+\rangle \phi_{1,m\varepsilon}^+ + \langle u, \phi_{1,m\varepsilon}^-\rangle \phi_{1,m\varepsilon}^-,\quad \Pi_{\varepsilon}^{\perp} = Id - \Pi_{\varepsilon}.
	\]
Then, the map $[\partial_s, \Pi_{\varepsilon}]u:=\partial_s(\Pi_{\varepsilon} u)-\Pi_{\varepsilon}(\partial_s u)$ defined for $u\in C^1(\mathbb{T},L^2(-1,1))$ extends by density to a bounded operator from $H^q(\mathbb{T},L^2(-1,1))$ to  $H^q(\mathbb{T},L^2(-1,1))$ still denoted $[\partial_s,\Pi_{\varepsilon}]$ and whose norm remains uniformly bounded for $\varepsilon\rightarrow 0$ ( for $q \in \{0,1\}$). The same conclusion holds for $[\partial_s,\Pi_{\varepsilon}^{\perp}]=-[\partial_s,\Pi_{\varepsilon}].$
\end{lem}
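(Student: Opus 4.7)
The strategy is a direct Leibniz computation combined with the uniform regularity estimates of Lemma \ref{cd}. For $u \in C^1(\mathbb{T}, L^2((-1,1),\mathbb{C}^2))$ I would first expand
\[
(\Pi_\varepsilon u)(s,t) = \sum_{\sigma = \pm} \langle u(s,\cdot), \phi_{1,m\varepsilon}^\sigma(s,\cdot)\rangle\, \phi_{1,m\varepsilon}^\sigma(s,t),
\]
and differentiate in $s$. Since only the eigenfunctions depend on $s$ (through $x = \nu(s)$, which is $C^3$ because $\gamma \in C^4$), the product rule yields, after cancellation of $\Pi_\varepsilon(\partial_s u)$, the explicit expression
\[
[\partial_s, \Pi_\varepsilon] u = \sum_{\sigma=\pm}\Big(\langle u, \partial_s \phi_{1,m\varepsilon}^\sigma\rangle \phi_{1,m\varepsilon}^\sigma + \langle u, \phi_{1,m\varepsilon}^\sigma\rangle \partial_s \phi_{1,m\varepsilon}^\sigma\Big),
\]
where each pairing is understood in $L^2((-1,1),\mathbb{C}^2)$ for fixed $s$.

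For $q=0$, the plan is to bound each of the four summands by Cauchy-Schwarz in the $t$-variable, using the uniform $L^\infty$-bounds $\|\phi_{1,m\varepsilon}^\sigma\|_{L^\infty} \leq C$ coming from Proposition \ref{prop:transope}\ref{iii} and $\|\partial_s \phi_{1,m\varepsilon}^\sigma\|_{L^\infty} \leq C$ coming from Lemma \ref{cd}. This yields the pointwise-in-$s$ estimate $\|[\partial_s, \Pi_\varepsilon] u(s,\cdot)\|_{L^2(-1,1)} \leq C' \|u(s,\cdot)\|_{L^2(-1,1)}$ with $C'$ independent of $\varepsilon$ and $s$, and integrating in $s$ over $\mathbb{T}$ gives the $L^2(\mathbb{T},L^2(-1,1))$-boundedness with constant uniform in $\varepsilon$.

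For $q=1$, I would differentiate the displayed formula above once more in $s$, producing six terms in which a derivative falls either on $u$ (then bounded using $\|\partial_s u\|_{L^2}$) or on one of the eigenfunctions (then requiring $\|\partial_s^2 \phi_{1,m\varepsilon}^\sigma\|_{L^\infty}$, again supplied uniformly in $\varepsilon$ by Lemma \ref{cd}). The same Cauchy-Schwarz argument then controls $\|\partial_s([\partial_s,\Pi_\varepsilon] u)\|_{L^2(\mathbb{T},L^2(-1,1))}$ by $\|u\|_{H^1(\mathbb{T},L^2(-1,1))}$ with a constant independent of $\varepsilon$. Density of $C^1(\mathbb{T}, L^2(-1,1))$ in $H^q(\mathbb{T}, L^2(-1,1))$ then extends $[\partial_s, \Pi_\varepsilon]$ to a bounded operator on the whole space. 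The statement for $\Pi_\varepsilon^\perp$ is immediate from $\Pi_\varepsilon^\perp = \mathrm{Id} - \Pi_\varepsilon$ and $[\partial_s, \mathrm{Id}] = 0$.

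No step presents a genuine obstacle; the only subtle point is tracking uniformity of all constants as $\varepsilon \to 0$, which is precisely why Lemma \ref{cd} was stated with $L^\infty$-remainders rather than, say, weaker $L^2$-type estimates.
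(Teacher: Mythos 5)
Your proposal follows essentially the same route as the paper's own proof: write out the four-term Leibniz expansion of $[\partial_s,\Pi_\varepsilon]u$, apply Cauchy--Schwarz pointwise in $s$ together with the uniform $L^\infty$-bounds on $\phi_{1,m\varepsilon}^\pm$, $\partial_s\phi_{1,m\varepsilon}^\pm$, $\partial_s^2\phi_{1,m\varepsilon}^\pm$ furnished by Lemma \ref{cd}, and conclude by density. The only cosmetic difference is that the paper organizes the $q=1$ case by isolating the block $[\partial_s,\Pi_\varepsilon]\partial_s u$ (to which the already-proved $q=0$ bound applies) before bounding the remaining terms involving $\partial_s^2\phi_{1,m\varepsilon}^\pm$, whereas you sketch bounding all resulting terms directly; the content is the same.
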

\begin{proof}The first part is proven by noticing that for $u\in C^1(\mathbb{T},L^2(-1,1))$ we have
	\begin{align}\label{comu}
	[\partial_s, \Pi_{\varepsilon}]u=\langle u,\partial_s \phi_{1,m\varepsilon}^+\rangle\phi_{1,m\varepsilon}^+ + \langle u,\partial_s \phi_{1,m\varepsilon}^-\rangle\phi_{1,m\varepsilon}^- + \left\langle u, \phi_{1,m\varepsilon}^+  \right\rangle \partial_s\phi_{1,m\varepsilon}^++\left\langle u, \phi_{1,m\varepsilon}^-  \right\rangle \partial_s\phi_{1,m\varepsilon}^-
	\end{align}
	 and 
\begin{align*}\|[\partial_s, \Pi_{\varepsilon}]u\|&\leq \|\langle u,\partial_s \phi_{1,m\varepsilon}^+\rangle\phi_{1,m\varepsilon}^+\| + \|\langle u,\partial_s \phi_{1,m\varepsilon}^-\rangle\phi_{1,m\varepsilon}^-\| + \|\left\langle u, \phi_{1,m\varepsilon}^+  \right\rangle \partial_s\phi_{1,m\varepsilon}^+\|\\& \qquad\qquad+ \|\left\langle u, \phi_{1,m\varepsilon}^-  \right\rangle \partial_s\phi_{1,m\varepsilon}^-\|\\&\leq \Big(\int_{\mathbb{T}} |\langle u,\partial_s \phi_{1,m\varepsilon}^+\rangle|^2ds\Big)^{\frac12} + \Big(\int_{\mathbb{T}} |\langle u,\partial_s \phi_{1,m\varepsilon}^-\rangle|^2ds\Big)^{\frac12} + \Big(\int_{\mathbb{T}} |\langle u,\phi_{1,m\varepsilon}^+\rangle|^2 \big(\int_{-1}^1 |\partial_s \phi_{1,m\varepsilon}^+|^2 dt \big)ds\Big)^{\frac12}\\
&\qquad\qquad + \Big(\int_{\mathbb{T}} |\langle u,\phi_{1,m\varepsilon}^-\rangle|^2 \big(\int_{-1}^1 |\partial_s \phi_{1,m\varepsilon}^-|^2 dt \big)ds\Big)^{\frac12}\\
&\leq \Big(\int_{\mathbb{T}}\big(\int_{-1}^1|u|^2dt\big) \big(\int_{-1}^1|\partial_s \phi_{1,m\varepsilon}^+|^2dt\big) ds\Big)^{\frac12} + \Big(\int_{\mathbb{T}}\big(\int_{-1}^1|u|^2dt\big) \big(\int_{-1}^1|\partial_s \phi_{1,m\varepsilon}^-|^2dt\big) ds\Big)^{\frac12}\\
&\qquad\qquad + \Big(\int_{\mathbb{T}} |\langle u,\phi_{1,m\varepsilon}^+\rangle|^2 \big(\int_{-1}^1 |\partial_s \phi_{1,m\varepsilon}^+|^2 dt \big)ds\Big)^{\frac12} + \Big(\int_{\mathbb{T}} |\langle u,\phi_{1,m\varepsilon}^-\rangle|^2 \big(\int_{-1}^1 |\partial_s \phi_{1,m\varepsilon}^-|^2 dt \big)ds\Big)^{\frac12}.
\end{align*}
Thanks to Lemma \ref{cd} and the explicit expression of $\partial_s\phi_{1,m\varepsilon}^\pm$ given in \eqref{phi} we know that there exists $c,k > 0$ such that for all $s \in \mathbb{T}$ we have for $\varepsilon$ sufficiently small
\[
	 \int_{-1}^1|\partial_s \phi_{1,m\varepsilon}^\pm|^2 dt \leq (c + k \varepsilon)^2.
\]
Here we have used that by hypothesis $\kappa \in L^\infty(\mathbb{T})$.
It yields
\begin{align*}
	\|[\partial_s, \Pi_{\varepsilon}]u\|&\leq 2(c+k\varepsilon) \|u\| + (c+k\varepsilon) \Big(\Big(\int_{\mathbb{T}} |\langle u,\phi_{1,m\varepsilon}^+\rangle|^2ds\Big)^{\frac12} + \Big(\int_{\mathbb{T}} |\langle u,\phi_{1,m\varepsilon}^-\rangle|^2ds\Big)^{\frac12}\Big)\\& \leq 4(c+k\varepsilon) \|u\|,
\end{align*}
where we have used that the last two-terms in the first line are controlled by the norm of the orthogonal projectors on $\phi_{1,m\varepsilon}^\pm$. Now, recall that $\gamma$ is of class $C^4$ which implies that $\phi_{1,m\varepsilon}^\pm \in C^3(\mathbb{T},C^{\infty}(-1,1))$ and $\partial_s\phi_{1,m\varepsilon}^\pm \in C^2(\mathbb{T},C^{\infty}(-1,1))$. Taking this into account for all $u \in C^1(\mathbb{T},L^2((-1,1)))$ we have
\[
	\partial_s [\partial_s,\Pi_\varepsilon] u = [\partial_s,\Pi_\varepsilon] \partial_s u + \sum_{j = \pm} \Big(\langle u, \partial_s^2 \phi_{1,m\varepsilon}^j\rangle\phi_{1,m\varepsilon}^j + \langle u,\phi_{1,m\varepsilon}^j\rangle \partial_s^2 \phi_{1,m\varepsilon}^j + 2 \langle u, \partial_s \phi_{1,m\varepsilon}^j\rangle \partial_s\phi_{1,m\varepsilon}^j\Big)
\]
Proceeding as above and using once again Lemma \ref{cd}, we get that for $\varepsilon$ small enough there exists some constant $c > 0$ such that
\[
	\|\partial_s [\partial_s,\Pi_\varepsilon] u\| \leq c\big(\|\partial_s u\| + \|u\|\big).
\]
Hence, this extends by density to any function in $H^1(\mathbb{T},L^2((-1,1)))$ and the lemma is proved.
	\end{proof}
\begin{myproof} {Theorem}{\ref{pr4}} 
	Note that for all $u \in \text{dom}(c^-)$ there holds
	\begin{multline*}
		\int_{\mathbb{T}\times (-1,1)} |\partial_s u|^2 ds dt = \int_{\mathbb{T}\times (-1,1)} |\partial_s (\Pi_{\varepsilon} u + \Pi_{\varepsilon}^\perp u)|^2 ds dt = \|\partial_s \Pi_{\varepsilon} u\|^2 + \|\partial_s \Pi_{\varepsilon}^{\perp} u\|^2 + 2 \Re\Big(\langle\partial_s \Pi_{\varepsilon} u,\partial_s \Pi_{\varepsilon}^\perp u\rangle\Big).
	\end{multline*}
Let us focus on the last term in the last equality. There holds
\begin{multline*}
	\langle \partial_s \Pi_{\varepsilon} u,\partial_s \Pi^{\perp}_{\varepsilon} u \rangle = \langle \partial_s \Pi^2_{\varepsilon} u,\partial_s (\Pi_{\varepsilon}^\perp)^2 u \rangle = \langle ([\partial_s,\Pi_{\varepsilon}] +\Pi_{\varepsilon}\partial_s)\Pi_{\varepsilon} u, ([\partial_s,\Pi_{\varepsilon}^\perp] +\Pi_{\varepsilon}^\perp\partial_s)\Pi_{\varepsilon}^\perp u\rangle\\ = \langle[\partial_s,\Pi_{\varepsilon}] \Pi_{\varepsilon} u, [\partial_s,\Pi_{\varepsilon}^\perp]\Pi_{\varepsilon}^\perp u\rangle + \langle[\partial_s,\Pi_{\varepsilon}]\Pi_{\varepsilon} u, \Pi_{\varepsilon}^\perp \partial_s \Pi_{\varepsilon}^\perp u\rangle + \langle \Pi_{\varepsilon}\partial_s \Pi_{\varepsilon} u, [\partial_s,\Pi_{\varepsilon}^\perp]\Pi_{\varepsilon}^\perp u \rangle\\ := J_1 + J_2 + J_3.
\end{multline*}
Now, we deal with each term appearing in the right-hand side of the last expression.
Using Lemma \ref{lem1}, and the elementary
inequality $ab\leq\frac{a^2\varepsilon}{2}+\frac{b^2}{2\varepsilon}$, where $a,b\in\mathbb{R}$ and $\varepsilon > 0$, we obtain
\[
	|J_1| \leq \|[\partial_s, \Pi_{\varepsilon}] \Pi_{\varepsilon} u\| \|[\partial_s, \Pi_{\varepsilon}^\perp]\Pi_{\varepsilon}^\perp u\| \leq \frac{c_1}2\varepsilon \|\Pi_{\varepsilon} u\|^2 + \frac{c_1}{2\varepsilon}\|\Pi_{\varepsilon}^\perp u\|^2,
\]where $c_1>0 $ is a constant.
Regarding $J_3$, similarly one gets that there exists $c_2>0$ such that 
\[
	|J_3| \leq \|\Pi_{\varepsilon}\partial_s \Pi_{\varepsilon} u\| \|[\partial_s,\Pi_{\varepsilon}^\perp]\Pi_{\varepsilon}^\perp u\| \leq c_2\|\partial_s \Pi_{\varepsilon} u\| \|\Pi_{\varepsilon}^\perp u\| \leq \frac{c_2}2\varepsilon \|\partial_s \Pi_{\varepsilon} u\|^2 + \frac{c_2}{2\varepsilon}\|\Pi_{\varepsilon}^\perp u\|^2.
\]
We are left with the investigation of the term $J_2$. Note that there holds
\[
	J_2 = \langle \Pi_{\varepsilon}^\perp [\partial_s, \Pi_{\varepsilon}]\Pi_{\varepsilon} u,\partial_s \Pi_{\varepsilon}^\perp u\rangle.
\]
In particular, we get
\begin{equation}
	\Pi_{\varepsilon}^\perp [\partial_s, \Pi_{\varepsilon}]\Pi_{\varepsilon} u = \langle \Pi_{\varepsilon} u, \phi_{1,m\varepsilon}^+\rangle \Pi_{\varepsilon}^\perp\partial_s \phi_{1,m\varepsilon}^+ + \langle \Pi_{\varepsilon} u, \phi_{1,m\varepsilon}^-\rangle \Pi_{\varepsilon}^\perp\partial_s \phi_{1,m\varepsilon}^-
	\label{eqn:exprcom}
\end{equation}
and we note that $\Pi_{\varepsilon}^\perp [\partial_s, \Pi_{\varepsilon}]\Pi_{\varepsilon} u \in H^1(\mathbb{T}\times(-1,1),\mathbb{C}^2)$. In particular performing an integration by parts, one gets
\[
	\langle \Pi_{\varepsilon}^\perp [\partial_s, \Pi_{\varepsilon}]\Pi_{\varepsilon} u,\partial_s \Pi_{\varepsilon}^\perp u\rangle = - \langle \partial_s\Pi_{\varepsilon}^\perp [\partial_s, \Pi_{\varepsilon}]\Pi_{\varepsilon}u, \Pi_{\varepsilon}^\perp u\rangle,
\]
as the boundary term vanishes. Now, using \eqref{eqn:exprcom} we note that
\begin{multline}
	\partial_s\Pi_{\varepsilon}^\perp [\partial_s, \Pi_{\varepsilon}]\Pi_{\varepsilon} u = \langle \partial_s \Pi_{\varepsilon} u, \phi_{1,m\varepsilon}^+\rangle \Pi_{\varepsilon}^\perp \partial_s\phi_{1,m\varepsilon}^+ + \langle \partial_s \Pi_{\varepsilon} u, \phi_{1,m\varepsilon}^-\rangle \Pi_{\varepsilon}^\perp\partial_s \phi_{1,m\varepsilon}^-+ \langle  \Pi_{\varepsilon} u,\partial_s \phi_{1,m\varepsilon}^+\rangle \Pi_{\varepsilon}^\perp \partial_s\phi_{1,m\varepsilon}^+ \\+ \langle \Pi_{\varepsilon} u, \partial_s\phi_{1,m\varepsilon}^-\rangle \Pi_{\varepsilon}^\perp\partial_s \phi_{1,m\varepsilon}^- + \langle \Pi_{\varepsilon} u, \phi_{1,m\varepsilon}^+\rangle \partial_s \Pi_{\varepsilon}^\perp\partial_s \phi_{1,m\varepsilon}^+ \\+ \langle \Pi_{\varepsilon} u, \phi_{1,m\varepsilon}^-\rangle \partial_s \Pi_{\varepsilon}^\perp \partial_s\phi_{1,m\varepsilon}^-.
\end{multline}
By Lemma $\ref{lem1}$, there exists $c_3>0$ such that 
\[
	\|\partial_s\Pi_{\varepsilon}^\perp [\partial_s, \Pi_{\varepsilon}]\Pi_{\varepsilon} u\| \leq c_3(\|\partial_s \Pi_{\varepsilon} u\|+\|\Pi_{\varepsilon} u\|).
\]
Hence, for $J_2$, there holds
\begin{multline*}
	|J_2| \leq c_3(\|\partial_s \Pi_{\varepsilon}u\| + \|\Pi_{\varepsilon} u\|)\|\Pi_{\varepsilon}^\perp u\|\leq c	_		3\frac{\varepsilon}2 (\|\partial_s \Pi_{\varepsilon} u\|+\|\Pi_{\varepsilon} u\|)^2 + c_3\frac{1}{2\varepsilon}\|\Pi_{\varepsilon}^\perp u\|^2 \\\leq c_3 \varepsilon \|\partial_s \Pi_{\varepsilon} u\|^2 + c_3\varepsilon \|\Pi_{\varepsilon} u\|^2 + \frac{c_3}{2\varepsilon}\|\Pi_{\varepsilon}^\perp u\|^2.
\end{multline*}
 Hence, collecting all the estimates, one gets for some $c_4>0$
\[
	\int_{\mathbb{T}\times (-1,1)} |\partial_s u|^2 ds dt \geq (1-c_4\varepsilon) \|\partial_s \Pi_{\varepsilon} u\|^2 - c_4\varepsilon \|\Pi_{\varepsilon} u\|^2 - \frac{c_4}\varepsilon \|\Pi_{\varepsilon}^\perp u\|^2.
\]
Thus, we obtain
\begin{multline*}
	c^-[u] \geq (1-C\varepsilon)(1-c_4\varepsilon)\|\partial_s \Pi_{\varepsilon} u\|^2 - (1-C\varepsilon)c_4\varepsilon \|\Pi_{\varepsilon} u\|^2 - (1-C\varepsilon)\frac{c_4}\varepsilon \|\Pi_{\varepsilon}^\perp u\|^2 - \int_{\mathbb{T}\times (-1,1)} \frac{\kappa^2}{4}|\Pi_{\varepsilon} u|^2 ds dt \\-  \int_{\mathbb{T}\times (-1,1)} \frac{\kappa^2}{4}|\Pi_{\varepsilon}^\perp u|^2 ds dt + \frac1{\varepsilon^2} \lambda_1(\mathcal{T}_\nu(m\varepsilon))^2 \|\Pi_{\varepsilon} u\|^2 \\+ \frac1{\varepsilon^2} \lambda_2(\mathcal{T}_\nu(m\varepsilon))^2 \|\Pi_{\varepsilon}^\perp u\|^2 - C \varepsilon\|u\|^2\\\geq (1-c_5\varepsilon) \|\partial_s \Pi_{\varepsilon} u\|^2 - \int_{\mathbb{T}\times(-1,1)}\frac{\kappa^2}{4}|\Pi_{\varepsilon} u|^2 ds dt + \big(\frac{1}{\varepsilon^2} \lambda_1(\mathcal{T}_{\nu}(m\varepsilon))^2-c_5 \varepsilon\big)\|\Pi_{\varepsilon} u\|^2\\ + (\frac{1}{\varepsilon^2}\lambda_2(\mathcal{T}_{\nu}(m\varepsilon))^2 - \frac{c_5}\varepsilon)\|\Pi_{\varepsilon}^\perp u\|^2,
\end{multline*}
for some constant $c_5>0.$ Here we have used that $\mu_1(\mathcal{T}_{\nu}(m\varepsilon)^2) = \lambda_1(\mathcal{T}_{\nu}(m\varepsilon))^2$, that $\mu_3(\mathcal{T}_{\nu}(m\varepsilon)^2) = \lambda_2(\mathcal{T}_{\nu}(m\varepsilon))^2$ and that $\kappa \in L^\infty(\mathbb{T})$. Now we infer that
\begin{multline*}
	\int_{-1}^1 |\partial_s \Pi_{\varepsilon} u|^2 dt = \int_{-1}^1 |\partial_s \big(\langle u, \phi_{1,m\varepsilon}^+\rangle\big) \phi_{1,m\varepsilon}^+ +\partial_s\big(\langle u, \phi_{1,m\varepsilon}^-\rangle\big) \phi_{1,m\varepsilon}^- + \langle u, \phi_{1,m\varepsilon}^+\rangle \partial_s \phi_{1,m\varepsilon}^+ + \langle u, \phi_{1,m\varepsilon}^-\rangle \partial_s \phi_{1,m\varepsilon}^-|^2 dt\\
	 =\int_{-1}^1  |\partial_s \big(\langle u, \phi_{1,m\varepsilon}^+\rangle\big) \phi_{1,m\varepsilon}^+ +\partial_s\big(\langle u, \phi_{1,m\varepsilon}^-\rangle \big)\phi_{1,m\varepsilon}^-|^2dt + \int_{-1}^1 |\langle u, \phi_{1,m\varepsilon}^+\rangle \partial_s \phi_{1,m\varepsilon}^+ + \langle u, \phi_{1,m\varepsilon}^-\rangle \partial_s \phi_{1,m\varepsilon}^-|^2 dt \\\qquad + 2\Re\bigg(\int_{-1}^1 \langle \partial_s\big(\langle  u, \phi_{1,m\varepsilon}^+\rangle\big) \phi_{1,m\varepsilon}^+ + \partial_s\big(\langle u, \phi_{1,m\varepsilon}^-\rangle\big) \phi_{1,m\varepsilon}^-,\langle u, \phi_{1,m\varepsilon}^+\rangle \partial_s \phi_{1,m\varepsilon}^+ + \langle u, \phi_{1,m\varepsilon}^-\rangle \partial_s \phi_{1,m\varepsilon}^-\rangle dt\bigg).
\end{multline*}
Next, we set $f^\pm := \langle u,\phi_{1,m\varepsilon}^\pm \rangle$ and $f = \begin{pmatrix}f^+\\f^-\end{pmatrix}$. Remark that $|f|^2 = \int_{-1}^1 |\Pi_{\varepsilon} u|^2dt$. Proceeding in the same way as for the obtention of Equations \eqref{sc}, \eqref{eqn:doubprod}, \eqref{eqn:ubdsu} and \eqref{eqn:ubfpri} one obtains the existence $c_6>0$ such that
\[
	\int_{-1}^1 |\partial_s \Pi_{\varepsilon} u|^2 dt \geq (1-c_6\varepsilon)|f' + i\kappa(\frac12-\frac1\pi)\sigma_3 f|^2 + \kappa^2\big(\frac12-\frac1\pi\big)^2|f|^2 - c_6 \varepsilon |f|^2.
\]
Hence, there holds
\begin{align}\label{form16}
	c^-[u]\geq (1-c_7\varepsilon)q^{1D}[f] + \big(\frac1{\varepsilon^2}\lambda_1(\mathcal{T}_\nu(m\varepsilon))^2- c_7'\varepsilon\big)\|f\|^2 + \big(\frac1{\varepsilon^2}\lambda_2(\mathcal{T}_\nu(m\varepsilon))^2- \frac{c_7
	}{\varepsilon}\big)\|\Pi_{\varepsilon}^\perp u\|^2,
\end{align}
for some constants $c_7,c_7'>0$.\\

Now we define the quadratic form $Q$ with tensor product domain:
\begin{align*}
	Q(f,v)&:=(1-c_7\varepsilon)q^{1D}[f] + \big(\frac1{\varepsilon^2}\lambda_1(\mathcal{T}_\nu(m\varepsilon))^2 - c_7'\varepsilon\big)\|f\|^2 + \big(\frac1{\varepsilon^2}\lambda_2(\mathcal{T}_\nu(m\varepsilon))^2- \frac{c_7
	}{\varepsilon}\big)\| v\|^2,\\\text{dom}(Q)&:=H^1(\mathbb{T},\mathbb{C}^2)\times\Pi_{\varepsilon}^{\perp} L^2(\mathbb{T}\times(-1,1),\mathbb{C}^2).
\end{align*}
Next, we give the quadratic form $Q_1$ acting on  $\Pi_{\varepsilon} L^2(\mathbb{T}\times(-1,1),\mathbb{C}^2)\times\Pi_{\varepsilon}^{\perp}L^2(\mathbb{T}\times(-1,1),\mathbb{C}^2)$ by $Q(V_1v_1,v_2)=Q_1(v_1,v_2),$ where $V_1$ is the unitary map given by 
\begin{align*}V_1:\Pi_{\varepsilon}L^2(\mathbb{T}\times(-1,1),\mathbb{C}^2)&\rightarrow L^2(\mathbb{T},\mathbb{C}^2)\\u&\mapsto V_1u=\begin{pmatrix}\left\langle u,\phi^+_{1,m\varepsilon}   \right\rangle \\\left\langle u, \phi^-_{1,m\varepsilon}      \right\rangle \end{pmatrix}.
\end{align*}

Let $u\in\text{dom}(c^-)$, by $(\ref{form16})$ we get
\begin{align*}
c^-[u]\geq Q(V_1\Pi_{\varepsilon} u,\Pi_{\varepsilon}^{\perp}u)=Q_1(\Pi_{\varepsilon} u, \Pi_{\varepsilon}^{\perp}u).
\end{align*}
The above inequality takes the form $c^-[u]\geq Q(Vu)$, where $Vu=(V_1\Pi_{\varepsilon} u, \Pi_{\varepsilon}^{\perp} u)$. As $V$ is unitary we get by the min-max principle that $\mu_j(c^-)\geq\mu_j(Q)$ for all $j\in\mathbb{N}$. Remark that we have the representation $Q_1=q_1\oplus q_2$ where $q_1$ and $q_2$ are two quadratic forms acting on $\Pi_{\varepsilon} L^2(\mathbb{T}\times(-1,1),\mathbb{C}^2)$ and $\Pi_{\varepsilon}^{\perp} L^2(\mathbb{T}\times(-1,1),\mathbb{C}^2)$, respectively by \begin{align*}
q_1[u]=(1-c_7\varepsilon)q^{1D}[V_1u] + \big(\frac1{\varepsilon^2}\lambda_1(\mathcal{T}_\nu(m\varepsilon))^2 - c_7'\varepsilon\big)\|V_1u\|^2 
\end{align*}
and 
\begin{align*}
q_2[v]=\big(\frac1{\varepsilon^2}\lambda_2(\mathcal{T}_\nu(m\varepsilon))^2- \frac{c_7
}{\varepsilon}\big)\| v\|^2,
\end{align*}

On the other hand since $\lambda_2(\mathcal{T}_\nu(m\varepsilon))^2>\frac{9\pi^2}{16}$ by \ref{exa} of Proposition \ref{tran}, and $\lambda_1(\mathcal{T}_\nu(m\varepsilon))^2 \leq \frac{\pi^2}4 + (m\varepsilon)^2$ one gets that for $j\in\mathbb{N}$, there exists $\varepsilon_1>0$ such that for $\varepsilon\in(0,\varepsilon_1)$ we have
\begin{align*} 
\frac1{\varepsilon^2}\lambda_2(\mathcal{T}_\nu(m\varepsilon))^2- \frac{c_7
}{\varepsilon} \geq (1-c_7\varepsilon)\lambda_j(q^{1D}) + \frac1{\varepsilon^2}\lambda_1(\mathcal{T}_\nu(m\varepsilon))^2 - c_7'\varepsilon.\end{align*}
This implies that 
\begin{align*}
	\mu_j(c^-)\geq\mu_j(q_1).	
\end{align*}
This concludes the proof of this proposition.\end{myproof}\\
Finally, we are able to prove the main theorem.

\begin{myproof} {Theorem}{\ref{thm:1}} The proof  is obtained taking into account Propositions \ref{up} and \ref{pr4} and noting that because of the symmetry of the spectrum we have $E_j(\varepsilon) = \sqrt{\mu_{2j}(a)}$. Hence, there holds
\begin{align*}
	E_j(\varepsilon)^2 &= \frac{\lambda_1(\mathcal{T}_\nu(m\varepsilon))^2}{\varepsilon^2} + \mu_{2j}(q^{1D}) + \mathcal{O}(\varepsilon)\\
	& = \frac{\pi^2}{16\varepsilon^2} + \frac{m}{\varepsilon} - \frac4{\pi^2}m^2 + m^2 + \mu_{2j}(q^{1D}) + \mathcal{O}(\varepsilon).
\end{align*}
Hence, there holds
\begin{align*}
	E_j(\varepsilon) &= \frac{\pi}{4\varepsilon} \sqrt{1 + \frac{16}{\pi^2}m\varepsilon - \frac{64}{\pi^4}m^2\varepsilon^2 + \frac{16}{\pi^2}(m^2 + \mu_{2j}(q^{1D}))\varepsilon^2} + \mathcal{O}(\varepsilon^2)\\& = \frac{\pi}{4\varepsilon}\Big(1 + \frac{8}{\pi^2}m\varepsilon - \frac{64}{\pi^4}m^2\varepsilon^2 + \frac{8}{\pi^2}\big(m^2 + \mu_{2j}(q^{1D})\big)\varepsilon^2\Big) + \mathcal{O}(\varepsilon^2)\\
	& = \frac{\pi}{4\varepsilon} + \frac2\pi m - \frac{16}{\pi^3}m^2\varepsilon + \frac{2}\pi(m^2 + \mu_{2j}(q^{1D}))\varepsilon + \mathcal{O}(\varepsilon^2)
\end{align*}
which proves Theorem \ref{thm:1}.
\end{myproof}

	\end{document}